\theoremstyle{plain}
\newtheorem{theorem}{Theorem}[section]
\newtheorem{lemma}[theorem]{Lemma}
\newtheorem{prop}[theorem]{Proposition}
\newtheorem{mydef}[theorem]{Definition}
\newtheorem{remark}[theorem]{Remark}
\theoremstyle{remark}
\newtheorem{obs}[theorem]{Observation}
\newcommand{\pr}{\mathbb{P}}
\newcommand{\RR}[0]{{\mathbb R}}
\newcommand{\EE}[0]{\mathbb{E}}
\newcommand{\WW}[0]{\mathbf{W}}
\newcommand{\ZZ}[0]{\mathbf{Z}}
\newcommand{\sss}[0]{\mathbf{s}}
\newcommand{\sub}[0]{\subseteq}
\renewcommand{\dots}[0]{,\ldots,}
\newcommand{\beq}[1]{\begin{equation}\label{#1}}
\newcommand{\enq}[0]{\end{equation}}
\newcommand{\gl}[0]{\lambda }
\newcommand{\mn}[0]{\medskip\noindent}
\newcommand{\nin}[0]{\noindent}
\newcommand{\cF}{\mathcal{F} }
\newcommand{\cG}{\mathcal{G} }
\newcommand{\cH}{\mathcal{H} }
\newcommand{\cM}{\mathcal{M} }
\newcommand{\cT}{\mathcal{T} }
\newcommand{\cU}{\mathcal{U} }
\newcommand{\bgl}{\boldsymbol\gl }
\title{On a conjecture of Talagrand on selector processes and \\ a consequence on positive empirical processes}
\author{Jinyoung Park \and Huy Tuan Pham}
\email{jinyoungpark@nyu.edu}
\address{Department of Mathematics, Courant Institute of Mathematical Sciences, New York University. 
251 Mercer St, New York, NY 10012}
\email{huypham@stanford.edu}
\address{Department of Mathematics, Stanford University\\
450 Jane Stanford Way, Building 380, Stanford, CA 94305}
\begin{document}

\maketitle
\begin{abstract}
For appropriate Gaussian processes, as a corollary of the majorizing measure theorem, Michel Talagrand (1987) proved that the event that the supremum is significantly larger than its expectation can be covered by a set of half-spaces whose sum of measures is small. We prove a conjecture of Talagrand that is the analog of this result in the Bernoulli-$p$ setting, and answer a question of Talagrand on the analogous result for general positive empirical processes. 
\end{abstract}
\maketitle

\section{Introduction}\label{Intro}

The study of suprema of stochastic processes is of central interest in probability theory, with influential applications in related areas. We refer the readers to \cite{Talagrand0, Talagrand2} for extensive discussions of various aspects of this subject. Through many fundamental developments, one now has fairly good understanding of the suprema of centered Gaussian processes\footnote{Following \cite{Talagrand2}, we always assume Gaussian processes are centered, i.e., $\mathbb EZ_t=0$ for all $t \in T$.}.  In particular, one can associate each Gaussian process $(Z_t)_{t\in T}$ indexed by a set $T$ with a metric on $T$ given by $d(t,s):=(\EE[(Z_t-Z_s)^2])^{1/2}$, and Talagrand's celebrated generic chaining bound and majorizing measure theorem \cite{TalagrandCB, TalagrandCBs, Talagrand2} determine the expectation of the supremum $\sup_{t\in T}Z_t$ (up to a constant factor) by a quantity depending only on the metric space $(T,d)$. Via this fundamental result, one can obtain deep insights and characterizations of the suprema of Gaussian processes. One important example is Theorem \ref{thm:gaussian} below, 
which gives a nice geometric characterization of large suprema of Gaussian processes: such event must be contained in a union of half-spaces whose sum of measures is small.

\begin{theorem}[Talagrand, Theorem 2.12.2 in \cite{Talagrand2}] \label{thm:gaussian}
There exists $L>0$ such that the following holds. Let $g$  be an $M$-dimensional standard Gaussian vector. For $\mathcal T \sub \mathbb{R}^M$, consider the process $Z_t = \langle t, g\rangle$ for $t\in \cT$. Then one can find a sequence of half-spaces $H_k$ of $\mathbb{R}^M$ with 
\[
\left\{\sup_{t\in \cT}Z_t \ge L\mathbb{E}\sup_{t\in \cT}Z_t\right\} \subset \bigcup_{k \ge 1} H_k,
\]
and
\[
\sum_{k\ge 1} \mathbb{P}(H_k) \le \frac{1}{2}. 
\]
\end{theorem}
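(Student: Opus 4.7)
The plan is to deduce the theorem from Talagrand's majorizing measure theorem, turning each ``bad'' increment of a generic chaining argument directly into one of the required half-spaces. By the majorizing measure theorem applied to $(\cT,d)$ with $d(s,t) = \|s-t\|_2$, one obtains an admissible sequence of partitions $(\cA_n)_{n \ge 0}$ of $\cT$ (with $|\cA_0|=1$, $|\cA_n| \le 2^{2^n}$) satisfying
\[
\sup_{t \in \cT}\sum_{n \ge 0} 2^{n/2} \Delta(A_n(t)) \le L_0\,\EE\sup_{t\in \cT}Z_t,
\]
where $A_n(t)$ is the cell of $\cA_n$ containing $t$ and $\Delta$ is the $\ell^2$-diameter. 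Pick a representative $t_A \in A$ for each $A$, and set $\pi_n(t) := t_{A_n(t)}$. For each $n \ge 0$ and each pair $B \sub A$ with $A \in \cA_n$ and $B \in \cA_{n+1}$, define the half-space
\[
H_{n,A,B} := \{g \in \RR^M : \langle t_B - t_A, g\rangle \ge c\cdot 2^{n/2} \Delta(A)\},
\]
where $c$ is a large absolute constant. The Gaussian tail bound gives $\pr(H_{n,A,B}) \le e^{-c^2 2^n/2}$, and with at most $|\cA_n||\cA_{n+1}| \le 2^{3 \cdot 2^n}$ pairs at level $n$, the total $\sum_{n,A,B}\pr(H_{n,A,B}) \le \sum_{n \ge 0}2^{3\cdot 2^n}e^{-c^2 2^n/2}$ is made as small as we wish (say $\le 1/4$) for $c$ large.

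Off $\bigcup H_{n,A,B}$, telescoping along the chain gives, for every $t \in \cT$,
\[
Z_t - Z_{t_0} = \sum_{n\ge 0}(Z_{\pi_{n+1}(t)} - Z_{\pi_n(t)}) \le c\sum_{n \ge 0} 2^{n/2} \Delta(A_n(t)) \le cL_0\,\EE\sup Z_t,
\]
where $t_0$ is the unique representative of $\cA_0$. Thus $\sup_t Z_t \le Z_{t_0} + cL_0\,\EE\sup Z_t$ outside the union, so for $L$ larger than $cL_0 + 1$ the event $\{\sup Z_t \ge L\EE\sup Z_t\}$ is contained in $\bigcup H_{n,A,B} \cup H_0$, where $H_0 := \{\langle t_0,g\rangle \ge \EE\sup Z_t\}$ is itself a half-space.

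The main obstacle is controlling $\pr(H_0) = \Phi(-\EE\sup Z_t/\|t_0\|)$, which approaches $1/2$ when $\|t_0\|$ is much larger than $\EE\sup Z_t$. I would resolve this by case analysis. When $0 \in \mathrm{conv}(\cT)$, one can adjoin $0$ to $\cT$ without changing $\EE\sup Z_t$ (since then $\sup_\cT Z_t \ge 0$ always, as any convex combination of the $Z_{t_i}$'s equal to zero forces at least one $Z_{t_i}\ge 0$), and then take $t_0=0$, making $\pr(H_0)=0$. When $0$ lies far outside $\mathrm{conv}(\cT)$, the supremum is effectively a single linear functional in the ``dominant direction'' of $\cT$, so the event itself is essentially a single half-space of measure $\le 1/2$; this half-space can replace the lowest-level chain half-space by tuning its threshold down to zero, at the cost of a constant factor in $L$. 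The two-point bound $\EE\sup Z_t \ge \Delta(\cT)/\sqrt{2\pi}$ keeps the relevant scales comparable across the two regimes, and reconciling them into a single uniform construction with universal constant $L$ and total half-space measure $\le 1/2$ is where the main technical work lies.
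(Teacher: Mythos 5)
The paper does not prove this statement; it cites it verbatim from Talagrand's book \cite[Theorem 2.12.2]{Talagrand2}, so there is no in-paper argument to compare with. Your chaining construction is the standard route to the result, and that part is correct: the half-spaces $H_{n,A,B}$ built from the increments of an admissible sequence, the tail bound $e^{-c^2 2^n/2}$ against the cardinality $2^{3\cdot 2^n}$, and the telescoping giving $\sup_t Z_t \le Z_{t_0} + cL_0\,\EE\sup Z_t$ off $\bigcup H_{n,A,B}$ are all fine. Your observation that when $0 \in \mathrm{conv}(\cT)$ one may adjoin $0$ to $\cT$ for free (since $\sup_\cT Z_t \ge 0$ a.s.) and take $t_0 = 0$, killing the half-space $H_0$, is a clean resolution of that case.

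The gap is exactly where you flag it: the case $0 \notin \mathrm{conv}(\cT)$ is not handled, and the sketch there (``the event is essentially a single half-space; tune its threshold to zero at the cost of a constant factor in $L$'') is not an argument that produces total measure $\le 1/2$. To see the difficulty concretely, take $\cT = \{t_0, t_1\}$ with $\|t_0\| \approx \|t_1\| = R$ large and $\|t_0 - t_1\| = \delta$ small, so $\EE\sup Z_t = \delta/\sqrt{2\pi}$; the event $\{\sup Z_t \ge L\EE\sup Z_t\}$ is the complement of a convex wedge whose vertex lies at distance $\approx L\delta/(R\sqrt{2\pi})$ from the origin and whose exterior angle exceeds $\pi$, so no single half-space contains it, and the obvious two-half-space cover has total measure $2\Phi(-L\delta/(R\sqrt{2\pi})) \to 1$ as $\delta/R \to 0$ with $L$ fixed; increasing $L$ does not repair this, because the relevant ratio is $\delta/R$, not $L$. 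This is not a technicality you can ``reconcile''; it reflects that the statement as reproduced in the paper omits the hypothesis $0 \in \cT$ that appears in Talagrand's Theorem 2.12.2. With that hypothesis restored, your Case~1 argument alone is the complete proof, and Case~2 simply does not arise.
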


Our main contribution in this paper is the proof of a conjecture of Talagrand on selector processes (Theorem \ref{Conj 5.7}; originally \cite[Problem 4.1]{Talagrand06}, \cite[Conjecture 5.7]{Talagrand} and \cite[Research Problem 13.2.3]{Talagrand2}) and a result on positive empirical processes (Theorem \ref{thm:pos-emp}; a question of Talagrand \cite{Talagrand-per} and a problem posed in \cite{Talagrand06}), which are analogous to Theorem \ref{thm:gaussian}. We first quickly state our main results, and then provide more context, definitions, and motivations for Talagrand's questions.

Given a finite set $X$, write $2^X$ for the power set of $X$. For $p \in [0,1]$, let $\mu_p$ be the product measure on $2^X$ given by $\mu_p(A)=p^{|A|}(1-p)^{|X \setminus A|}$. 
We use $X_p$ for the random variable whose distribution is $\mu_p$. For $S \sub X$, define the \textit{upset generated by $S$} to be $\langle S \rangle := \{T: T \supseteq S\}$. Following \cite{Talagrand}, we say $\cF \sub 2^X$ is \textit{$p$-small} if there is $\cG \sub 2^X$ such that 
\beq{cover1} \cF \sub \langle \cG \rangle :=\bigcup_{S \in \cG}\langle S \rangle\enq
and
\beq{p-small} \sum_{S \in \cG} p^{|S|} \le 1/2.\enq
We say $\cG$ is a \textit{cover} of $\cF$ if \eqref{cover1} holds.

Our first main result is the Bernoulli-$p$ analog of Theorem \ref{thm:gaussian}.

\begin{theorem} \label{Conj 5.7} 
There exists $L>0$ such that the following holds. Consider any $0<p< 1$, any finite set $X$ and any collection $\Lambda$ of sequences $\bgl=(\gl_i)_{i \in X}$ with $\gl_i \ge 0$. 
Then the family
\[ \left\{S \sub X:\sup_{\bgl \in \Lambda} \sum_{i \in S} \gl_i \ge L\EE \sup_{\bgl \in \Lambda} \sum_{i \in X_p} \gl_i\right\}\]
is $p$-small.

\end{theorem}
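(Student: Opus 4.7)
My plan is to argue by contradiction. Let $M := \EE \sup_{\bgl \in \Lambda} \sum_{i \in X_p} \lambda_i$ and set
\[\cF := \left\{S \subseteq X : \sup_{\bgl \in \Lambda} \sum_{i \in S} \lambda_i \ge LM\right\}.\]
Since each $\lambda_i \ge 0$, $\cF$ is upward-closed. Assuming $\cF$ admits no cover $\cG$ with $\sum_{T \in \cG} p^{|T|} \le 1/2$, the goal is to derive $\EE \sup_\bgl \sum_{i \in X_p} \lambda_i > M$ (for $L$ large enough), contradicting the definition of $M$.

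For each $S \in \cF$ fix a witness $\bgl^S \in \Lambda$ with $\sum_{i \in S} \lambda_i^S \ge LM$. I plan to use an iterative ``fragment-extraction'' scheme in the spirit of the Park--Pham proof of the fractional Kahn--Kalai conjecture and the sunflower spread lemmas of Alweiss--Lovett--Wu--Zhang and Frankston--Kahn--Narayanan--Park. Maintain a partial cover $\cG_t$ whose mass $\sum_{T \in \cG_t} p^{|T|}$ grows slowly with $t$. If there exist $S \in \cF \setminus \langle \cG_t \rangle$ and $T \subseteq S$ with ``weighted density'' $\sum_{i \in T} \lambda_i^S$ large relative to $p^{|T|}$, augment $\cG_t$ by $T$. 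Otherwise, the residual $\cF \setminus \langle \cG_t \rangle$ is weighted-spread in a quantitative sense that should couple cleanly with the random set $X_p$.

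The coupling step is: for a suitable distribution $\nu$ on $\cF \setminus \langle \cG_t \rangle$, sample $S \sim \nu$ independently of $X_p$. The weighted-spread property should guarantee that with probability bounded away from zero, $X_p \cap S$ captures $\bgl^S$-weight $\gtrsim LM$, so that $\sup_\bgl \sum_{i \in X_p} \lambda_i \gtrsim LM$ on this event. Running the iteration until $\sum_{T \in \cG_t} p^{|T|}$ would exceed $1/2$ (which by assumption never finishes covering $\cF$), the contributions from these ``spread'' steps accumulate to force $\EE \sup_\bgl \sum_{i \in X_p} \lambda_i > M$.

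I expect the main obstacle to be the weighted spread lemma itself. In the unweighted Kahn--Kalai setting, spreadness translates directly to containment of some $S \in \cF$ in $X_p$ via uniform sampling; here the weights $\lambda_i^S$ depend on the witness $S$, so an element may be heavy for one witness and light for another, obstructing a single-pass counting argument. I anticipate decomposing the weights into $O(\log(|X|/p))$ dyadic scales, applying a spread-type argument within each scale, and absorbing the resulting logarithmic losses into $L$. Additional care will be needed to re-select witnesses $\bgl^S$ as the cover grows, and to ensure the distribution $\nu$ used for coupling at each stage can be produced consistently with the fragments already removed; these consistency requirements, rather than the core spread argument, are where I expect the most delicate work to lie.
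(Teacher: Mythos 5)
Your proposal names the right circle of ideas---fragments, dyadic scales, a contradiction via coupling with $X_p$---but it is a plan, not a proof, and both the high-level structure and one of the quantitative expectations are at odds with what actually works here.

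The most immediate problem is the plan to ``absorb the resulting logarithmic losses into $L$.'' The constant $L$ must be universal, independent of $|X|$ and $p$, so any loss that genuinely grows like $\log(|X|/p)$ (the number of dyadic scales) would break the theorem. The paper does work with scales $100^{-j}$ for $j$ up to $\tau \approx \log_{100}|X|$, but it never pays a factor of $\tau$: after preprocessing it restricts to ``legal profiles'' in which each nonzero $s_j=|S_j|$ satisfies a lower bound of roughly $100^j/(j+1)^2$, which makes the per-scale contributions $(J/4)^{-.9 s_j}$ summable over $j$ with a total bound independent of $\tau$. That convergence is a deliberate structural feature, not incidental bookkeeping, and your sketch has no analogue of it.

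More fundamentally, you leave the ``weighted spread lemma'' as a black box, and this is precisely where the difficulty lies. The paper is explicit that the witness-dependence of the weights (an element can be heavy for one $\lambda^S$ and light for another) defeats a straightforward spread argument even for the weaker fractional form of the conjecture, and that the \emph{generalized} fragment and \emph{minimum fragment} machinery is what resolves it. The paper's argument is also not the iterative partial-cover scheme you describe. Instead, it passes to a uniform random set $\WW$ of size $\lfloor K|X|p\rfloor$ and, for each $W$, constructs a cover $\cU(W)=\{T(S,W):S\in\cF\}$ in one shot. A fragment of $(S,W)$ (Definition~\ref{def.fragment}) is not a dense subset of $S$; it is a set $U\subseteq S\setminus W$ together with an index $b$ and an auxiliary $S'\in\cF$ sharing the same partial profile $\sss_b$, satisfying (i) $S'_j\subseteq W\cup U$ for $j\le b$ and (ii) a quantitative inequality on how much $\lambda^{S'}$-weight at scales $>b$ is captured by $W\cup U$. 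Propositions~\ref{obs.T}--\ref{obs.B.nonempty} then pin down the structure of the \emph{minimum} fragment when $W$ is bad ($b_T\ge 0$ and $t\ge .9\sum_{j\le b_T}s_j$), and Lemma~\ref{ML} bounds $\sum_{W\text{ bad}}\sum_{U\in\cU(W)}p^{|U|}$ by an encoding argument: set $Z=W\cup T$, pick a canonical feasible $\hat S$ from $Z$, and recover $T$ as a subset of $\bigcup_{j\le b}\hat S_j$. Averaging over bad $W$ then produces a single cheap cover. This one-shot, union-bound-over-traces argument built on minimum fragments is exactly the missing ingredient in your sketch---it replaces both the ``spread lemma'' and the iterative re-selection of witnesses (which, as you rightly worry, would need consistency conditions that your proposal does not supply and that the paper's construction sidesteps entirely by fixing $\lambda^S$ once and for all).
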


\nin In \cite{Talagrand}, Talagrand explains the meaning of the above theorem this way: Conjecture 5.7 (now Theorem~\ref{Conj 5.7}) shows that ``if you are given a selector process, and would like to prove that, within a multiplicative factor, the quantity $\EE \sup_{\bgl \in \Lambda} \sum_{i \in X_p} \gl_i \le M$ for a constant $M$, there is in the end no other way than to find the witnesses that the set $\{S \sub X:\sup_{\bgl \in \Lambda} \sum_{i \in S} \gl_i \ge LM\}$ is small.'' In the same place, Talagrand suggests that this result ``provides fundamental information.'' 

Our second main result is the analog of Theorem \ref{thm:gaussian} for positive empirical processes (see \eqref{empirical} and the discussion that follows). 

\begin{theorem}\label{thm:pos-emp}
There exists $L>0$ such that the following holds. For any $N>0$ and i.i.d. random variables $Y_1 \dots Y_N$ distributed according to a Borel probability measure $\nu$ on a Polish space $\mathbb{T}$, and any finite collection $\cF$ of Borel functions $f: \mathbb{T} \to \mathbb{R}_{\ge 0}$ with $\cF \subseteq L^\infty(\mathbb{T})$, consider the positive empirical process $Z_f = \frac{1}{N} \sum_{i=1}^{N} f(Y_i)$. 
Assume that $0<\EE[\sup_{f\in \cF}Z_f]<\infty$. Then one can find a collection $\mathcal{C}$ of pairs $(g,t)$ where $g$ is a nonnegative function on $\mathbb{T}$ and $t>0$, so that with $E_{g,t} := \{Z_g \ge t\}$, we have 
\[
\left\{\sup_{f\in \cF}Z_f \ge L\mathbb{E}\sup_{f\in \cF}Z_f\right\} \subset \bigcup_{(g,t)\in \mathcal{C}} E_{g,t},
\]
and
\[
\sum_{(g,t)\in \mathcal{C}} \mathbb{P}(E_{g,t}) \le \frac{1}{2}. 
\]

\end{theorem}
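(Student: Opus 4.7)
The plan is to reduce Theorem \ref{thm:pos-emp} to Theorem \ref{Conj 5.7} by Bernoullizing the empirical process. Fix a small parameter $c \in (0, 1/e)$. Partition $\mathbb{T}$ into $m = N/c$ atoms $B_1, \ldots, B_m$ of equal $\nu$-measure $c/N$, which is legitimate up to an arbitrarily small approximation error since $\mathbb{T}$ is Polish and $\cF$ is finite. Replacing each $f \in \cF$ by the step function constant on each $B_j$ with value $a^f_j := f(y_j)$ at a representative $y_j \in B_j$, we have $Z_f = \frac{1}{N}\sum_j a^f_j m_j$, where $m_j := |\{i : Y_i \in B_j\}|$. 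I then introduce the Bernoullized companion process $\tilde Z_f := \frac{1}{N}\sum_j a^f_j \xi_j$ with $\xi_j := \mathbf{1}[m_j \ge 1]$; the $\xi_j$ are negatively associated Bernoullis with common marginal $q_c = 1-(1-c/N)^N \in (c/2, c]$.

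Next, I apply Theorem \ref{Conj 5.7} to the family $\Lambda := \{(a^f_j/N)_j : f \in \cF\}$ at the \emph{inflated} Bernoulli parameter $p' := ec$ (the factor $e$ will be essential). Let $L_0$ denote the constant provided by Theorem \ref{Conj 5.7}. This produces a cover $\cG$ satisfying $\sum_{G \in \cG}(p')^{|G|} \le 1/2$ of the family $\{S \subseteq [m] : \sup_f \sum_{j \in S} a^f_j/N \ge L_0 M'\}$, where $M' := \mathbb{E}\sup_f \sum_{j \in X_{p'}} a^f_j/N$. By coupling $X_{p'}$ to $O(1)$ i.i.d.\ copies of $X_{q_c}$ and invoking a Joag-Dev--Proschan comparison of independent versus NA Bernoullis with matched marginals, one obtains $M' \le C_0\,\mathbb{E}\sup_f Z_f$ for a universal constant $C_0$. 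For each $G \in \cG$, I define $A_G := \bigcup_{j \in G} B_j$, $g_G := \mathbf{1}_{A_G}$, and $t_G := |G|/N$, so that $E_{g_G, t_G} = \{Z_{g_G} \ge t_G\}$ contains the event $\{\xi_j = 1 \ \forall j \in G\}$ (which forces at least $|G|$ of the $Y_i$'s into the disjoint union $A_G$). A standard Chernoff estimate yields
\[
\mathbb{P}(E_{g_G, t_G}) \;=\; \mathbb{P}\!\bigl(\mathrm{Binom}(N, |G|c/N) \ge |G|\bigr) \;\le\; (ec)^{|G|} \;=\; (p')^{|G|},
\]
so $\sum_{G \in \cG} \mathbb{P}(E_{g_G, t_G}) \le 1/2$. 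The inflation $p' = ec$ is designed precisely so that the factor-$e$-per-atom loss in Chernoff is absorbed into the selector cover bound. Choosing the target $L$ in Theorem \ref{thm:pos-emp} large enough (absorbing $L_0$, $C_0$, and the Bernoullization constant below), the event $\{\sup_f Z_f \ge L\,\mathbb{E}\sup_f Z_f\}$ implies $\sup_f \tilde Z_f \ge L_0 M'$, which forces the random set $\{j : \xi_j = 1\}$ into some $\langle G \rangle$ with $G \in \cG$, and hence into the corresponding $E_{g_G, t_G}$.

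The main obstacle is the uniform control of the Bernoullization error $\sup_f (Z_f - \tilde Z_f) = \frac{1}{N}\sum_j a^f_j (m_j-1)_+$, which counts weighted excess multiplicity within atoms. The per-$f$ expectation is of order $c\,\mathbb{E} Z_f$, and one needs the uniform bound $\mathbb{E}\sup_f (Z_f - \tilde Z_f) \le O(c)\,\mathbb{E}\sup_f Z_f$, so that taking $c$ below a universal constant costs only a constant factor in $L$. The natural route is to interpret the excess as an empirical process on the random subset of samples that are not the first in their respective atom---a set of expected size $Nc/2$---and either iterate the Bernoullization or stochastically dominate it by a subsampled copy of $Z_f$. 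A secondary subtlety is that $L$ must be universal (independent of $\nu$, $N$, and $\sup_f \|f\|_\infty$), so the error estimate has to be genuinely uniform, and the step-function approximation of $\cF$ must be handled by a compactness or monotone convergence argument.
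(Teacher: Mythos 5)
Your partition of $\mathbb{T}$ into $m = N/c$ cells of equal $\nu$-measure $c/N$ is the crux of the proposal, and it is exactly the step that fails: when $\nu$ has atoms of $\nu$-measure exceeding $c/N$ (as the theorem permits --- $\nu$ is an arbitrary Borel probability measure on a Polish space), no such partition exists, and the claim that this is ``legitimate up to an arbitrarily small approximation error'' is simply false in that case. The issue is not cosmetic. With an atom $x$ of $\nu$-mass $1/2$, roughly $N/2$ of the $Y_i$ land on $x$; the Bernoullization $\xi_j = \mathbf{1}[m_j\ge 1]$ retains only one of those, and the ``excess'' $\frac{1}{N}\sum_j a^f_j(m_j-1)_+$ is of the same order as $Z_f$ itself, not $O(c)\,\EE Z_f$. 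So the uniform error bound $\EE\sup_f(Z_f-\tilde Z_f)\le O(c)\,\EE\sup_f Z_f$, which you flag as the ``main obstacle'' and leave as a sketch, is not merely incomplete --- it is false for atomic $\nu$. This is precisely the difficulty the paper singles out, and it is why the paper does not Bernoullize the empirical process but instead proves a genuinely multiset version of the selector theorem (Theorem \ref{thm:multi-w}), discretizes $\nu$ into a finite measure $\mu$ by partitioning on the level sets of the functions in $\cF$ (not into equal-mass cells), and constructs the witness events $E_{g,t}$ from a multiset cover via an exponential-moment/Markov argument (the $\cH_{\tilde g_G,\tilde t_G}$), with the cost bookkeeping absorbing the $G(x)!$ factors that arise from multiplicity.

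Even restricted to nonatomic $\nu$, where the equal-mass partition is legitimate, the proposal still has two unresolved steps: the uniform bound $\EE\sup_f(Z_f-\tilde Z_f)\le O(c)\,\EE\sup_f Z_f$ (your proposed ``iterate the Bernoullization or stochastically dominate by a subsampled copy'' is not carried out, and it is not obvious that the supremum of the excess process is controlled just because each individual term is), and the comparison $M'\le C_0\,\EE\sup_f Z_f$ between the i.i.d.\ Bernoulli selector and the negatively associated indicators $(\xi_j)$, for which Joag-Dev--Proschan gives inequalities for products of increasing functions on disjoint index sets, but not directly for $\EE\sup_f$ of a positive linear process. It is worth knowing that the nonatomic case can indeed be handled along lines close in spirit to yours: Bednorz, Martynek, and Meller (cited in the paper as \cite{BMM}) avoid the multiset machinery there by adding, to the selector cover, an extra cheap cover for the rare event that some cell is hit more than once --- a cleaner mechanism than controlling $\EE\sup_f(Z_f-\tilde Z_f)$ directly. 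But to prove Theorem \ref{thm:pos-emp} as stated, with no continuity assumption on $\nu$, your route does not go through, and something like the multiset generalization is needed.
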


{We remark that the conclusion of Theorem \ref{thm:pos-emp} readily extends to cases where $\cF$ is not necessarily finite, for example when $\cF$ is a totally bounded infinite subset of $L^\infty(\mathbb{T})$.} 

In \cite{Talagrand06}, Talagrand proved versions of our main results for the special case where the class of functions consists of indicator of sets, and posed the question of extending the results to general classes of functions as important open problems. This is fully addressed by our results in both the setting of selector processes and empirical processes. (Talagrand's result on empirical processes in \cite{Talagrand06} is stated slightly differently; see the remark at the end of Section \ref{sec.Thm1.3} for how to obtain from our proof a stronger version of Talagrand's result in the general setting.) 

\mn \textbf{More context and definitions.} A Gaussian process can be described in the form $Z_t = \sum_{i=1}^{\infty}\xi_i t_i$ where $\xi_1,\xi_2,\ldots$ are i.i.d. standard Gaussian random variables, and $t=\{t_i\}_{i \ge 1} \in T \subseteq \ell^2$ is a square-summable sequence. Alternatively, one can view Gaussian processes as random series (with i.i.d. Gaussian coefficients) of functions $f_i:T\to \RR$ (where $f_i(t) = t_i$), an object of natural interest. Generalizing the coefficients beyond the Gaussian case immediately leads to substantially more challenging questions. In particular, in the case where the coefficients $\xi_i$ are independent Rademacher random variables, a longstanding conjecture of Talagrand (``the Bernoulli Conjecture'') suggests a precise way to control the supremum in expectation in the spirit of chaining. The conjecture was only resolved recently in a breakthrough by Bednorz and Lata\l a \cite{BLatala}.

 The problem is even harder when $\xi_i$ are centered Bernoulli-$p$ random variables  {(to be contrasted with our focus later on the ordinary nonnegative Bernoulli-$p$ random variables)}. In the ``generalized Bernoulli Conjecture,'' which now is \cite[Theorem 11.12.1]{Talagrand2} and whose proof is inspired by work of Bednorz and Martynek \cite{BM}, Talagrand showed that suprema of centered Bernoulli processes can be described in terms of quantities depending only on the metric structure of appropriate classes of functions, together with quantities depending on suprema of processes of the form $Z_t = \sum_{i=1}^{M}\xi_i t_i$ for $t:\mathbb{N}\to \RR_{\ge 0}$, and $\xi_i$ i.i.d. (ordinary) Bernoulli-$p$ random variables. This way, the study of random sums of functions leads us naturally to \textit{positive selector processes}, which we now define formally. Recall that $X_p$ is the random variable whose distribution is $\mu_p$. Given a collection $\Lambda$ of sequences $\bgl=(\gl_i)_{i \in X}$, we define the \emph{selector process} associated to $\Lambda$ as the process indexed by $\Lambda$ whose value at $\lambda$ is given by 
\beq{emp}Z_\lambda := \sum_{i\in X_p}\lambda_i. \enq
A selector process is \emph{positive} if $\gl_i \ge 0$ for all $\gl \in \Lambda$ and $i\in X$. 

Next, we motivate the study of \textit{positive empirical processes}. Given i.i.d. random variables $Y_1 \dots Y_N$ distributed according to a Borel probability measure on a Polish space $\mathbb{T}$, and a class $\cF$ of functions $f:\mathbb{T}\to \mathbb{R}$, 
an \emph{empirical process} is a process indexed by $\cF$ of the form \beq{empirical} Z_f := \frac{1}{N}\sum_{i\le N}f(Y_i).\enq We say that an empirical process is \emph{positive} if the functions $f$ in the class $\cF$ are nonnegative. Empirical processes and their suprema form an important subject in probability theory and have a wide range of applications in computer science, statistics and machine learning \cite{Talagrand1, Talagrand2, SS, S}. There, one is often interested in the suprema of the empirical process, $\sup_{f\in \cF} Z_f$. While one is often interested in \emph{centered} empirical processes, i.e. those indexed by zero mean functions, by a deep result of Talagrand \cite[Theorem 6.8.3]{Talagrand2} (``the fundamental theorem of empirical processes''), the supremum of general centered empirical processes can always be controlled, in a precise sense, by quantities depending only on the metric structure of an appropriate class of functions, and the supremum of an appropriate \textbf{positive} empirical process. Thus, the study of positive empirical processes is key to understanding centered empirical processes.

As we have discussed in the previous paragraphs, the study of suprema of centered stochastic processes naturally leads us to the study of their positive counterpart. {While substantial advances in chaining allow us to understand precisely the reduction from centered processes to positive processes, suprema of positive processes are much less understood. As Talagrand explains in \cite{Talagrand2}, in many cases, we know that ``chaining explains all the boundedness due to cancellation, but what could we ask about boundedness of processes where no cancellation occurs?'' Thus, while we have good understanding of the effect of cancellation on the suprema of stochastic processes, positive processes (where there is no cancellation to exploit) are much less understood and are essentially the last missing piece in the picture. In this context, our main theorems address the task of filling in this missing piece:}
 Theorem \ref{Conj 5.7} resolves a conjecture of Talagrand on large suprema of positive selector processes, one of the questions in \cite[Chapter 13]{Talagrand2} on ``Unfulfilled dreams;'' and Theorem \ref{thm:pos-emp} answers a question of Talagrand on general positive empirical processes. We point out that our proof of Theorem \ref{thm:pos-emp} builds on a close connection between positive empirical processes and a version of selector process with multiplicities that has been informally observed in \cite{Talagrand}.

Roughly speaking, Theorem \ref{thm:gaussian} shows that, for a Gaussian process, one can find simple geometric ``witnesses'' (half-spaces) which cover the event that the supremum of the process is large, and the sum of measures of these witnesses is small. In particular, even though it is a simple application of Markov's inequality to show that the probability of the event $\{ \sup_{t\in T}Z_t \ge L\mathbb{E}\sup_{t\in T}Z_t\}$ is small, the simple geometric witnesses provide a much more refined structure on this event. Similarly, in Theorem \ref{Conj 5.7}, which is in the setting of positive selector processes, 
the role of the half-spaces is replaced by the upsets $\langle S \rangle$, and being $p$-small is an analog of admitting a cover by half-spaces with small total measure. The meaning of Theorem \ref{thm:pos-emp} can also be sketched in a similar way -- it describes explicit ``simple'' witnesses (half-spaces of the empirical measure) that cover the tail event of $\sup_{f}Z_f$. The covering perspective, as observed by Talagrand \cite{Talagrand}, also provides striking connections between these sets of questions and the study of thresholds, specifically the Kahn-Kalai conjecture. Building on the insights in the present paper, particularly the notion of minimum fragment, we obtain in \cite{KKC} the resolution of the Kahn-Kalai conjecture. 
 
Finally, we would like to mention that the ``abstract setting'' of \cite[Conjecture 5.7]{Talagrand}, which is \cite[Problem 4.2]{Talagrand06}, \cite[Conjecture 7.1]{Talagrand} and \cite[Research Problem 13.3.2]{Talagrand2}, remains open.

\mn \textbf{Reformulations.}  We will prove Theorem \ref{Conj 5.7} via the slightly more convenient equivalent reformulation below. 
As observed in \cite{Talagrand}, the following theorem is equivalent to Theorem \ref{Conj 5.7}:

\begin{theorem}\label{Conj 5.7'}
There exists $L'>0$ such that the following holds. Consider any $0<p<1$, any finite set $X$, and any family $\cF \sub 2^X$. Assume that for each $S \in \cF$ we are given a sequence $\bgl^S=(\gl^S(i))_{i \in X}$ with $\gl^S(i) \ge 0$ and
\beq{wt.lb'} \sum_{i \in S} \gl^S(i) \ge 1. \enq
Then if $\cF$ is \textbf{not} $p$-small, we have
\beq{aim'} \EE \sup_{S \in \cF} \sum_{i \in X_p} \gl^S(i) \ge 1/L'.\enq
\end{theorem}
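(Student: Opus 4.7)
The plan is to prove the contrapositive form of Theorem~\ref{Conj 5.7'}: set $M := \EE \sup_{S \in \cF} \sum_{i \in X_p} \gl^S(i)$, and assuming $M < 1/L'$ for a sufficiently large absolute constant $L'$, I will build an explicit cover $\cG \sub 2^X$ with $\cF \sub \langle \cG \rangle$ and $\sum_{T \in \cG} p^{|T|} \le 1/2$. A natural form of such a $\cG$ is to attach to every $S \in \cF$ a witness $T_S \sub S$ and take $\cG = \{T_S : S \in \cF\}$; since $\cG$ is counted as a set, witnesses shared across different $S$ are collapsed, which is essential for obtaining a small total cost.

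For each $S$, I would declare $T_S$ to be a \emph{minimum fragment}: a subset of $S$, minimal in a precise sense, whose $\gl^S$-weight already reaches a fixed constant threshold $\tau$. Two plausible incarnations are (i) $T_S$ of smallest cardinality satisfying $\sum_{i \in T_S} \gl^S(i) \ge \tau$, or (ii) a more delicate variant that balances the cost $p^{|T|}$ against the accumulated $\gl^S$-weight. In either version, on the event $\{X_p \supseteq T_S\}$, which has probability $p^{|T_S|}$, the deterministic inequality $\sum_{i \in X_p} \gl^S(i) \ge \sum_{i \in T_S} \gl^S(i) \ge \tau$ forces $\sup_{S' \in \cF} \sum_{i \in X_p} \gl^{S'}(i) \ge \tau$. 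Consequently,
\[
M \;\ge\; \tau \cdot \mu_p\!\left(\langle \cG \rangle\right).
\]
It would therefore suffice to prove the ``spread'' estimate $\mu_p(\langle \cG \rangle) \gtrsim \sum_{T \in \cG} p^{|T|}$, after which rearranging yields $\sum_{T \in \cG} p^{|T|} \lesssim M$, which is at most $1/2$ once $L'$ is chosen large enough.

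The real content is the spread estimate, which is where the minimum-fragment property must be used. I would perform a multi-scale decomposition of $\cG$ according to ranges of $|T|$ (equivalently of $p^{|T|}$), and at each scale prove an analog of the Alweiss--Lovett--Wu--Zhang / Frankston--Kahn--Narayanan--Park spread lemma: minimality of each fragment should imply that no small set $U$ sits below too many fragments of comparable size, so the trivial union bound $\mu_p(\langle \cG \rangle) \le \sum_T p^{|T|}$ is essentially tight. The main obstacle, in my view, is to calibrate the minimum-fragment condition and the scale partition so that simultaneously (a) every $S \in \cF$ admits a fragment at some scale, (b) the overlap pattern of fragments within and across scales is controllable, and (c) the scale-by-scale spread bounds can be summed (in a geometric series) without losing the absolute constants. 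Making these three demands coexist -- existence, spread, and summability -- is precisely the technical innovation that the authors indicate the ``minimum fragment'' notion brings, and which subsequently fed into their resolution of Kahn--Kalai.
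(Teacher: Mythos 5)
Your approach is genuinely different from the paper's, and as stated it has a gap that I don't think can be patched in this form. In the paper, Theorem~\ref{Conj 5.7'} is deduced by a short reduction to Theorem~\ref{Conj 5.7''}, and the real work is in Theorem~\ref{Conj 5.7''}: one draws a uniformly random $w$-element set $W$, defines for each $S$ a minimum $(S,W)$-\emph{fragment} $T(S,W)$ that \emph{depends on $W$} (Definition~\ref{def.fragment}), takes the $W$-dependent cover $\cU(W)=\{T(S,W):S\in\cF\}$, and then proves in Lemma~\ref{ML} that $\sum_{W\,\text{bad}}\sum_{U\in\cU(W)}p^{|U|}$ is small, so that averaging over $W$ produces one fixed cheap cover. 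The auxiliary randomness $W$ and the averaging/encoding argument over it are essential, as is a definition of fragment far more delicate than ``smallest subset reaching weight $\tau$.''

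Your proposal instead produces a single cover $\cG=\{T_S\}$ with fragments $T_S\subseteq S$ that depend on nothing but $S$ and $\gl^S$, proves $M\ge\tau\,\mu_p(\langle\cG\rangle)$, and then relies on the reverse union bound $\mu_p(\langle\cG\rangle)\gtrsim\sum_{T\in\cG}p^{|T|}$. This is the gap. That estimate is not what the ALWZ/FKNP spread lemma says: the spread lemma lower-bounds $\mu_p(\langle\cG\rangle)$ by an absolute constant (close to $1$) when $\cG$ is sufficiently spread relative to $1/p$; it does not make the union bound nearly tight. Indeed the reverse union bound can fail by an unbounded factor even for a maximally spread $\cG$: with $\cG$ consisting of $n$ disjoint singletons, $\mu_p(\langle\cG\rangle)=1-(1-p)^n\le 1$ while $\sum_T p^{|T|}=np$ can be arbitrarily large, even though each fragment is as minimal as it can possibly be. The point is that minimality of $T_S$ inside each $S$ is an intrinsic property of the pair $(S,\gl^S)$ and imposes no constraint on the global overlap pattern of the $T_S$'s or on $|\cG|$; spread is a collective property which your construction does not enforce. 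Relatedly, your incarnation (i) of the fragment does not depend on $p$, while the cost $p^{|T|}$ and the theorem's conclusion depend on $p$ in an essential way, so some $p$-dependence (in the paper, via the random $W$ of size $\lfloor Knp\rfloor$) must enter the definition of fragment. If you want a route that doesn't pass through a random $W$, you would need a fundamentally different mechanism to control $\sum_{T\in\cG}p^{|T|}$; bounding it via $\mu_p(\langle\cG\rangle)$ alone cannot work.
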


\nin Note that we only have $\sup_{S \in \cF} \mathbb E \sum_{i \in X_p} \gl^S(i) \ge p$, so \eqref{aim'} 
suggests a nontrivial phenomenon. 
The theorem below implies Theorem \ref{Conj 5.7'}. As usual, for $m \ge 0$, we denote by ${X \choose m}$ the collection of subsets of $X$ of size $m$.

\begin{theorem}\label{Conj 5.7''}
There exists $K>0$ such that the following holds. Consider any $0<p<1$, any finite set $X$, and any family $\cF \sub 2^X$. Assume that for each $S \in \cF$ we are given a sequence $\bgl^S=(\gl^S(i))_{i \in S}$ with $\gl^S(i) \ge 0$ and
\beq{wt.lb} \sum_{i \in S} \gl^S(i) \ge 1. \enq
Suppose $\cF$ is \textbf{not} $p$-small. Then, for $\WW$ chosen uniformly at random from ${X \choose \lfloor K |X|p \rfloor}$, we have
\beq{aim} \EE \sup_{S \in \cF} \sum_{i \in S \cap \WW} \gl^S(i) \ge 10^{-11}.\enq

\end{theorem}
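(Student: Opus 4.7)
The plan is to prove the contrapositive. Assume, for a sufficiently large absolute constant $K$, that $\EE\sup_{S\in\cF}\sum_{i\in S\cap\WW}\gl^S(i) < 10^{-11}$ when $\WW$ is uniform on $\binom{X}{\lfloor K|X|p\rfloor}$; I will then construct a cover $\cG\sub 2^X$ of $\cF$ with $\sum_{T\in\cG}p^{|T|}\le 1/2$, contradicting the assumption that $\cF$ is not $p$-small. It is convenient to realise $\WW$ as containing the union $R_1\cup\cdots\cup R_L$ of $L\asymp K$ independent uniform random subsets of $X$ of cardinality $\lceil|X|p\rceil$ each, so that the argument can proceed sequentially and exploit independence across $j$; the discrepancy in sampling models is absorbed into constants.

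The construction of $\cG$ is an iterative greedy extraction. Set $\cF_1 = \cF$; at step $j$, for each $S\in\cF_j$ inspect the $\gl^S$-weight captured on the ``fresh'' part $S\cap R_j\setminus (R_1\cup\cdots\cup R_{j-1})$. If this weight exceeds a threshold $\delta\asymp 1/L$, extract a minimum-size fragment $T_{S,j}\subseteq S\cap R_j\setminus (R_1\cup\cdots\cup R_{j-1})$ achieving weight $\ge \delta/2$, record $T_{S,j}$ in $\cG$, and remove $S$ from $\cF_{j+1}$. Any $S$ surviving all $L$ rounds must satisfy $\sum_{i\in S\cap\WW}\gl^S(i) < L\delta$; applying Markov's inequality to the contradiction hypothesis shows that survivors form a small enough subfamily, in expectation over $\WW$, that they can be absorbed into $\cG$ via trivial fragments $T_S = S$ at negligible $p$-cost, or handled by a recursive reduction on $\cF$.

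The crux is to bound $\EE\sum_{T\in\cG}p^{|T|}\le 1/2$ over the draw of $R_1,\ldots,R_L$, so that a good realisation yields the needed deterministic cover. The key mechanism is a dichotomy on the concentration profile of $\bgl^S$. In the spread regime, where $\|\gl^S\|_\infty$ is small, any fragment reaching $\gl^S$-weight $\ge\delta/2$ must contain $\gtrsim \delta/(2\|\gl^S\|_\infty)$ coordinates, so $|T_{S,j}|$ is forced to be large and $p^{|T_{S,j}|}$ decays geometrically; in the concentrated regime, the heavy coordinates of $\bgl^S$ form a small distinguished set $H_S$ which the contradiction hypothesis forces to typically avoid $\WW$, allowing $H_S$ itself to serve as the fragment, while the multiplicity of distinct $H_S$'s across $\cF$ is controlled since a large multiplicity would itself give a cover contradicting non-$p$-smallness. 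The main obstacle is performing this calibration rigorously: choosing $K$, $L$, $\delta$, and the concentration cutoff so that the contributions of both regimes, together with those of the leftover survivors, sum to at most $1/2$ in expectation, while carefully handling the conditional dependencies introduced by the iterative extraction (in particular, ensuring that minimum-size fragments remain minimum relative to the evolving family $\cF_j$).
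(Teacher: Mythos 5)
Your proposal outlines an iterative, FKNP/ALWZ-style extraction scheme (split $\WW$ into $L$ rounds, greedily extract a fragment from each surviving $S$ per round, recurse). The paper's proof is structurally quite different: it works with a \emph{single} random $W$ of size $w=Knp$, defines a delicate notion of $(S,W)$-fragment indexed by a scale parameter $b$ (Definition~\ref{def.fragment}), and counts the total $p$-cost $\sum_{W\ \text{bad}}\sum_{U\in\cU(W)}p^{|U|}$ directly via the ``specify $Z=W\cup T$ first, then recover $T\subseteq\bigcup_{j\le b}\hat S_j$'' trick (Lemma~\ref{ML}, Steps 1--2). There is no round-by-round iteration over $\cF$, and the multi-scale structure of the weights is handled inside the fragment index $b$ and the profile $\sss_b$, not by a spread/concentrated dichotomy.

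The genuine gap in your proposal is the step you yourself flag as ``the main obstacle'': you do not have a mechanism to bound $\sum_{T\in\cG}p^{|T|}$. Your dichotomy is too coarse to supply one. A single $S$ can have weight distributed across many dyadic scales simultaneously, so it is neither ``spread'' nor ``concentrated''; the paper's index $b$ exists precisely to let the fragment slide across scales, with condition \eqref{fragment2} trading off covered weight above scale $b$ against the residual count $\sum_{j\le b}s_j-|U|$ at or below scale $b$. In your concentrated regime, the crucial claim is that the multiplicity of distinct heavy sets $H_S$ is controlled because ``a large multiplicity would itself give a cover contradicting non-$p$-smallness.'' This is circular: the whole point is to \emph{construct} a cheap cover, so you cannot invoke non-$p$-smallness to bound the cost of the cover you are constructing. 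The paper avoids this by never counting fragments through $\cF$ at all: given $Z=W\cup T$, the canonical feasible set $\hat S(Z)$ depends only on $Z$, so the number of possible $T$'s is bounded by $2^{n_b}$ (with $t\ge.9n_b$ forcing $n_b\lesssim t$) regardless of how large or pathological $\cF$ is. Nothing in your sketch plays the role of this canonicalization, and without it the cost bound does not go through. Your observation that small $\|\gl^S\|_\infty$ forces large fragments is correct but only addresses the easy half of the problem.

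A secondary issue: the iterative reduction $\cF_1\supseteq\cF_2\supseteq\cdots$ driven by a fixed threshold $\delta\asymp 1/L$ per round is tailored to the unweighted same-size setting (where it appears in \cite{FKNP,ALWZ}). In the present weighted setting, ``surviving $L$ rounds'' gives $\sum_{i\in S\cap\WW}\gl^S(i)<L\delta\asymp 1$, which is the generic case, not a rare event, so Markov does not make the survivor set negligible. The paper instead defines bad $W$ by an absolute threshold ($10^{-10}$) on $\max_{S}\sum_{i\in S\cap W}\gl^S(i)$ and needs no survivors because the fragment $T(S,W)\subseteq S$ is defined for \emph{every} $S\in\cF$, with $\cU(W)$ automatically a cover.
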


The derivation of Theorem \ref{Conj 5.7} from Theorem \ref{Conj 5.7'} can be found in \cite{Talagrand}. We include the simple proof of Theorem \ref{Conj 5.7'} from Theorem \ref{Conj 5.7''}, and the derivation of Theorem \ref{Conj 5.7} from Theorem \ref{Conj 5.7'} in Section \ref{red} for completeness.

The general weighted setting of Theorem \ref{Conj 5.7'} and Theorem~\ref{thm:multi-w} (the main results leading toward Theorem~\ref{Conj 5.7} and Theorem \ref{thm:pos-emp}) poses significant challenges, as one can anticipate from the statement: while the assumption on $\cF$ is inherently combinatorial, the conclusion applies to general weight functions on the sets in $\cF$. In the simplest unweighted case (where we restrict all $S\in \cF$ to have the same size $s$, and $\gl^S(i)=\frac{1}{s} \mathbb{I} (i\in S)$), our proof shares some inspiration with the argument in \cite{ALWZ, FKNP, KNP}, and the recent improvement in \cite{KKC}, although in this special case there are also alternative approaches. In particular, as we mentioned earlier, in this case (and more generally in the special case where the class of functions consists of $\{0,1\}$-valued functions), Talagrand \cite{Talagrand06} gave a nice proof using a second moment argument. The treatment of the general weighted setting (where we place no restriction on $S\in \cF$ and $\gl^S$) requires a different set of ideas. 

Firstly we work with elements at different dyadic scales, and one important idea is to choose to work with only certain ``informative'' scales. Secondly, a major difficulty of the general setting is that the weight of an element can vary with the set $S\in \cF$. In order to address this problem, we need a much more delicate and involved notion of \emph{fragments} (Definition \ref{def.fragment}), as well as the associated notion of \emph{minimum fragment} and its analysis. The use of minimum fragment (associated to a simple notion of fragment), inspired by the present work, is crucial in our recent resolution of the Kahn-Kalai conjecture \cite{KKC}. We note that while the improved analysis in \cite{KKC} using minimum fragment is not required if one wants to establish the fractional version of the Kahn-Kalai conjecture, it remains critical in our proof even if one only wants to establish the weaker fractional version of Theorem \ref{Conj 5.7} (\cite[Conjecture 6.8]{Talagrand}). In particular, the main difficulty of weight functions changing with $S\in \cF$ remains in the fractional version of Theorem \ref{Conj 5.7}, and the full strength and flexibility of our generalized notion of fragment and minimum fragment is required to handle this problem, which is how we arrive at these notions.

As discussed later in Section \ref{sec.Thm1.3}, proving Theorem \ref{thm:pos-emp}, especially in the general case where we do not impose that the underlying distribution $\nu$ is continuous, requires more ideas. In particular, we will need a version of Theorem \ref{Conj 5.7} with multiplicities, Theorem \ref{thm:multi-w}, which itself admits an equivalent reformulation, Theorem \ref{thm:multi'-w}. We emphasize that the core principles behind the proof of these results are the same as those behind the proof of Theorem \ref{Conj 5.7}. 

\mn \textbf{Organization.} The core part of the paper, the proof of Theorem~\ref{Conj 5.7''}, is contained in Section \ref{sec.proof}. The proof of Theorem \ref{thm:pos-emp} is given in Section \ref{sec.Thm1.3}, building on a version of Theorem~\ref{Conj 5.7''} with multiplicities, Theorem \ref{thm:multi'-w}. In the course, we have made no attempt to optimize absolute constants. Logarithms are in base $e$ unless otherwise specified. 

\section{Proof of Theorem \ref{Conj 5.7}}\label{sec.proof}

\subsection{Reductions between Theorem \ref{Conj 5.7}, Theorem \ref{Conj 5.7'} and Theorem \ref{Conj 5.7''}}\label{red}
In this section, we give short reductions from Theorem \ref{Conj 5.7} to Theorem \ref{Conj 5.7'} and Theorem \ref{Conj 5.7''}. 

The next derivation can be found in \cite{Talagrand} for the ``weakly $p$-small'' version of Theorem \ref{Conj 5.7}.
\begin{proof}[Proof of Theorem \ref{Conj 5.7} from Theorem \ref{Conj 5.7'}] 
Consider a collection $\Lambda$ of sequences $\bgl=(\gl_i)_{i \in X}$ with $\lambda_i \ge 0$, and with $L$ to be determined, consider the collection
\[ \cG:=\left\{S \sub X:\sup_{\bgl \in \Lambda} \sum_{i \in S} \gl_i \ge L\EE \sup_{\bgl \in \Lambda} \sum_{i \in X_{p}} \gl_i\right\}.\]
By definition of $\cG$, for each $S \in \cG$ there is $(\tau^S(i))_{i \in X} \in \Lambda$ for which
\beq{6.9} \sum_{i \in S} \tau^S(i) \ge L\EE \sup_{\bgl \in \Lambda} \sum_{i \in X_{p}} \gl_i. \enq
Now for each $S \in \cG$ define $\bgl^S$:
\[\gl^S(i)=\begin{cases}
\tau^S(i) & \mbox{ if } i \in S; \\
0 & \mbox{ otherwise. }
\end{cases}\]
Thus, for any $Y \sub X$, we have
\[\sup_{\bgl \in \Lambda} \sum_{i \in Y} \gl_i \ge \sum_{i \in Y} \tau^S(i) \ge \sum_{i \in Y} \gl^S(i)\]
and, hence, in particular
\beq{6.10} \mathbb E\sup_{\bgl \in \Lambda} \sum_{i \in X_{p}} \gl_i \ge \mathbb E \sup_{S \in \cG} \sum_{i \in X_{p}}\lambda^S(i).\enq
Assume for contradiction that $\cG$ is not $p$-small. Then by \eqref{6.9} and Theorem \ref{Conj 5.7'}, we have
\[ \EE \sup_{S \in \cG} \sum_{i \in X_{p}} \gl^S(i) \ge (L/L') \mathbb E \sup_{\bgl \in \Lambda} \sum_{i \in X_{p}} \gl_i. \]
Combining with \eqref{6.10}, we have
\[\mathbb E \sup_{\bgl \in \Lambda} \sum_{i \in X_{p}} \gl_i \ge (L/L') \mathbb E \sup_{\bgl \in \Lambda} \sum_{i \in X_{p}} \gl_i.\]
This is a contradiction for $L >L'$.
\end{proof}

\begin{proof}[Proof of Theorem \ref{Conj 5.7'} from Theorem \ref{Conj 5.7''}] 

Note that if $\cF$ is not $p$-small, then $|X|p \ge 1/2$ (since $\{\{x\}:x \in X\}$ covers $\cF$). With $N:=\max\{\lfloor |X|p \rfloor, 1\}$, let $\WW'$ be chosen uniformly at random from ${X \choose N}$. Then with $\zeta:=N /\lfloor K |X|p \rfloor$, we can think of choosing $\WW'$ as choosing $\WW$ first and then picking a $\zeta$-fraction of it. We consider two cases.

If $|X|p \ge 1$, then using the facts that $\zeta \ge 1/(2K)$ and that $\pr(|X_p| \ge \lfloor |X|p \rfloor) \ge 1/2$ (see~\cite{Lord}),
\[\begin{split}
\EE \sup_{S \in \cF} \sum_{i \in X_p} \gl^S(i) \ge \frac{1}{2}\EE \sup_{S \in \cF} \sum_{i \in S \cap \WW'} \gl^S(i) \ge\frac{1}{4K}\EE \sup_{S \in \cF} \sum_{i \in S \cap \WW} \gl^S(i) \ge \frac{1}{4 \cdot10^{11}K}.
\end{split}\]

If $1/2 \le |X|p < 1$, then using the facts that $\zeta \ge 1/K$ and that $\pr(|X_p| \ge 1) \ge 1 - (1-p)^{|X|} \ge 1 - e^{-1/2} > 1/4$, 
\[\begin{split}
\EE \sup_{S \in \cF} \sum_{i \in X_p} \gl^S(i) \ge \frac{1}{4}\EE \sup_{S \in \cF} \sum_{i \in S \cap \WW'} \gl^S(i) \ge\frac{1}{4K}\EE \sup_{S \in \cF} \sum_{i \in S \cap \WW} \gl^S(i) \ge \frac{1}{4 \cdot10^{11}K}.
\end{split}\]
Now, Theorem \ref{Conj 5.7'} follows by letting $L'=4 \cdot10^{11}K$.
\end{proof}

\subsection{Proof of Theorem \ref{Conj 5.7''}}\label{main}
Given the previous reductions, the key step in the proof of Theorem \ref{Conj 5.7} is the proof of Theorem \ref{Conj 5.7''}, which is the focus of this section and the key result in the paper containing the main insights.

In this section, $p, X, \cF,$ and  $\bgl^S$ are as in Theorem \ref{Conj 5.7''}, and $K$, a universal constant, is chosen sufficiently large to support our proof. We use $n$ for $|X|$, and $J$ and $w$ are quantities that satisfy
\[ \lfloor Knp \rfloor=Jnp=w;\]
$S,S',S''$ and $\hat S$ represent members of $\cF$, and $W \in {X \choose w}$. Finally, for $m \in \mathbb Z^+$, $[m]$ denotes $\{0,1,\ldots,m\}$.

 We say $W$ is \textit{good} if
\[\max_{S \in \cF} \sum_{i \in S \cap W} \gl^S(i) \ge 10^{-10},\]
and \textit{bad} otherwise. 
 Note that \eqref{aim} follows if a $(1/10)$-fraction of ${X \choose w}$ is good. Therefore, to prove Theorem~\ref{Conj 5.7''}, it suffices to show that
\beq{STS.ass} \mbox{if a $(9/10)$-fraction of ${X \choose w}$ is bad,}\enq
 then 
\beq{STS} \mbox{$\cF$ is $p$-small.}\enq

Before getting to the details of the proof, we first give an informal description of our overall strategy. Roughly speaking, a family $\cF$ is $p$-small if $\cF$ admits a ``cheap'' cover, where being cheap refers to the condition in \eqref{p-small}. In order to derive \eqref{STS}, we will first construct a cover of $\cF$, where the cover, $\cU=\cU(W)$, depends on the choice of $W$. We will show that the overall cost of the covers among \textit{bad} $W$'s is small, from which, combined with \eqref{STS.ass}, the existence of a cheap cover is guaranteed.

We first need some pre-processing steps on the weights $\gl^S$. 

\begin{obs}\label{reduction1}
Let $\tau=\lfloor \log_{100} n\rfloor+2$. It is sufficient to prove Theorem \ref{Conj 5.7''} assuming that, for all $S \in \cF$ and $i \in X$,
\beq{wt.ass1} \mbox{$\gl^S(i)=100^{-j}$ for some $j \in \{0,1,2,\ldots, \tau\}$ ;} \enq
with \eqref{wt.lb} weakened to
\beq{wt.lb'} \sum_{i \in S} \gl^S(i) \ge 100^{-2}.\enq
\end{obs}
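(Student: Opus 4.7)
The plan is to replace the given weights $\bgl^S$ by rounded weights $\hat\bgl^S$ that (i) are dominated pointwise by $\bgl^S$, (ii) take values only in $\{0\}\cup\{100^{-j}: j\in [\tau]\}$, and (iii) still satisfy a lower bound of $100^{-2}$ on $\sum_{i\in S}\hat\gl^S(i)$. Since the conclusion \eqref{aim} is monotone in the weights, any bound $\EE\sup_{S\in\cF}\sum_{i\in S\cap \WW}\hat\gl^S(i)\ge 10^{-11}$ obtained under the reduced hypotheses immediately transfers to the original weights $\bgl^S$. Thus once I verify that the rounded weights fulfill the hypotheses of the reduced theorem, the reduction is complete.

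I would define $\hat\gl^S(i)$ by three elementary operations: cap weights $\ge 1$ at $100^0=1$, truncate weights $<100^{-\tau}$ to $0$, and for each intermediate value round down to the largest power of $100$ not exceeding $\gl^S(i)$. By construction $\hat\gl^S\le\gl^S$ pointwise, with values in the prescribed set. To obtain the weakened lower bound \eqref{wt.lb'}, I would split into cases: if some $\gl^S(i)\ge 1$, then $\hat\gl^S(i)=1$ and the bound is trivial; otherwise, the truncated small-weight tail contributes at most $|S|\cdot 100^{-\tau}\le n\cdot 100^{-\tau}$ to the original sum, which by the choice $\tau=\lfloor\log_{100}n\rfloor+2$ is at most $1/100$. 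The surviving weights then contribute at least $99/100$ to $\sum_{i\in S}\gl^S(i)$, and rounding each down to a power of $100$ costs at most a factor of $100$ per term, giving $\sum_{i\in S}\hat\gl^S(i)\ge 99/10000>100^{-2}$.

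The final piece is to transfer the $p$-smallness hypothesis. Setting $\hat S:=\{i\in S:\hat\gl^S(i)>0\}\sub S$ and $\hat\cF:=\{\hat S:S\in\cF\}$, any cover $\langle\cG\rangle$ of $\hat\cF$ is automatically a cover of $\cF$, since each $S\supseteq\hat S$ lies in every upset containing $\hat S$. Hence, if $\cF$ is not $p$-small, neither is $\hat\cF$, and the reduced version of Theorem \ref{Conj 5.7''} applies to the pair $(\hat\cF,\hat\bgl)$, yielding the desired bound. The entire argument is bookkeeping with no real conceptual obstacle; the only delicate point is to calibrate $\tau$ so that truncation loses only a constant fraction of the total weight, which the choice $\tau=\lfloor\log_{100}n\rfloor+2$ accomplishes through the estimate $n\cdot 100^{-\tau}\le 1/100$.
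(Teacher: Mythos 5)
Your proposal is correct and follows essentially the same route as the paper: cap weights at $1$, round down to powers of $100$, discard the tiny tail using the choice of $\tau$, transfer $p$-smallness via the fact that shrinking each $S$ only makes the family harder to cover, and invoke monotonicity of the conclusion in the weights. The only cosmetic difference is the order of the truncation and rounding steps, which commute and yield the same $99/10000 > 100^{-2}$ lower bound.
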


\begin{proof}[Justification.]
We may first assume that 
\beq{le1} \mbox{$\gl^S(i) \le 1$ for all $S$ and $i$,}\enq
by capping larger weights at $1$. Under this assumption, for each $S$, let 
\[S_j=\{i \in S: \gl^S(i) \in [100^{-j}, 100^{-j+1})\} \quad (j=0,1,2,\ldots).\] By replacing the weights of elements in $S_j$ with $100^{-j}$, we can assume that $\gl^S(i)=100^{-j}$ for all $i \in S_j$ with \eqref{wt.lb} weakened to $\sum_{i \in S} \gl^S(i) \ge 1/100$. Finally, note that 
\[\sum_{j>\tau}\gl^S(S_j) \le |S|10^{-(\tau+1)} \le n100^{-\log_{100} n-2}=100^{-2}. \]
Thus, by removing elements in $S_j$ for $f>\tau$, we can assume that $S_j=\emptyset$ for all $j>\tau$ with \eqref{wt.lb} weakened to $\sum_{i \in S} \gl^S(i) \ge 100^{-2}$.
\end{proof}

From now on, we assume \eqref{wt.ass1}. For $S \in \cF$, we write $s_j$ for $|S_j|$, and define the \textit{profile} of $S$ to be $\sss=\sss(S)=(s_0,\ldots,s_\tau)$.
In Observation \ref{obs2} below, we use the trivial fact that
\beq{obs.spread} \mbox{a family $\cF=\{S_1,S_2,\ldots\}$ is $p$-small if $\cF'=\{S'_1, S'_2,\ldots \}$ is $p$-small and $S_i \supseteq S'_i \quad \forall i$.}\enq

\begin{obs}\label{obs2}
It is sufficient to prove Theorem \ref{Conj 5.7''} assuming that for all $S \in \cF$, in addition to \eqref{wt.ass1}, for each $j \in [\tau]$ either $s_j=0$ or
\beq{sj.lb} 100^j/(100^4(j+1)^2) \le s_j <2\cdot100^{j}; \mbox{ and}\enq
\beq{sj.power} \mbox{$s_j$ is a power of $100$} \enq
with \eqref{wt.lb'} weakened to
\beq{wt.lb''} \sum_{i \in S} \gl^S(i) \ge 100^{-4}.\enq
\end{obs}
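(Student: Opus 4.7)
The plan is to follow the template used in the justification of Observation \ref{reduction1}: for each $S \in \cF$ I would replace $S$ by a carefully chosen subset $S' \sub S$ with the restricted weights $\gl^{S'} := \gl^S|_{S'}$, so that the extra conditions \eqref{sj.lb}, \eqref{sj.power} and \eqref{wt.lb''} hold. Since $S' \sub S$, the contrapositive of \eqref{obs.spread} ensures that the new family $\cF' := \{S' : S \in \cF\}$ is still not $p$-small, and the pointwise inequality
\[
\sum_{i \in S' \cap W} \gl^{S'}(i) \le \sum_{i \in S \cap W} \gl^S(i)
\]
ensures that any lower bound of type \eqref{aim} for $(\cF', \gl^{S'})$ transfers back to $(\cF, \gl^S)$. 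Everything thus reduces to designing a good refinement $S \mapsto S'$.

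For the refinement, fix $S$ with slices $S_j = \{i \in S : \gl^S(i) = 100^{-j}\}$ of sizes $s_j$. I call level $j$ \emph{heavy} if $s_j \ge 100^j/(100^3 (j+1)^2)$ and \emph{light} otherwise. First I would discard every element lying at a light level. Then, for each heavy level $j$, I would cap $s_j$ at $100^j$ and round the result down to the nearest power of $100$; writing $n'_j$ for this number, I keep an arbitrary $n'_j$-element subset of $S_j$. By construction $n'_j$ is a power of $100$ with $n'_j \le 100^j < 2 \cdot 100^j$, which gives \eqref{sj.power} and the upper half of \eqref{sj.lb}. For the lower half of \eqref{sj.lb}, there are two cases: either $s_j \ge 100^j$ and then $n'_j = 100^j$, or $s_j < 100^j$ and then $n'_j > s_j/100 \ge 100^j/(100^4(j+1)^2)$ by heaviness.

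The only remaining check is the weakened weight bound \eqref{wt.lb''}. The total light-level contribution satisfies
\[
\sum_{j \text{ light}} s_j \cdot 100^{-j} < \sum_{j \ge 0} 100^{-3}(j+1)^{-2} < 2\cdot 100^{-3},
\]
which is negligible compared with $100^{-2}$, so \eqref{wt.lb'} forces the heavy-level contribution to be at least, say, $100^{-2}/2$. The subsequent capping and rounding cost at most a factor $100$ per heavy level: either $s_j \ge 100^j$ already gives $n'_j \cdot 100^{-j} = 1$, or $n'_j \cdot 100^{-j} \ge s_j \cdot 100^{-j-1}$. Summing over heavy $j$ comfortably yields $\sum_j n'_j \cdot 100^{-j} \ge 100^{-3}/2 \ge 100^{-4}$, confirming \eqref{wt.lb''}; this is precisely where the bound weakens from $100^{-2}$ to $100^{-4}$.

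The argument is essentially combinatorial bookkeeping, so I do not foresee a real obstacle. The one subtlety to watch is the calibration of the heavy threshold: it has to be taken tight enough (I use $100^{-3}/(j+1)^2$ rather than $100^{-4}/(j+1)^2$) so that after the further factor of $100$ lost in the final rounding, the resulting $n'_j$ still lies inside the window \eqref{sj.lb}.
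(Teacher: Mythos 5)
Your proposal is correct and follows essentially the same recipe as the paper: prune away low-weight levels, then round each $s_j$ down to a power of $100$, bookkeeping the weight loss. One small point in your favor: your calibration of the heavy threshold at $100^j/(100^3(j+1)^2)$ --- one factor of $100$ above the target lower bound $100^j/(100^4(j+1)^2)$ in \eqref{sj.lb} --- is the careful choice, since it explicitly accounts for the further factor of (at most) $100$ lost when rounding $s_j$ down to a power of $100$; the paper's pruning step uses the threshold $100^j/(100^4(j+1)^2)$ directly and leaves this compatibility between pruning and rounding implicit.
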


\begin{proof}[Justification.]  First, we can greedily remove elements from each $S$ until $\gl^S(S)<2$ (note that we have \eqref{le1} under \eqref{wt.ass1}). This gives the upper bound in \eqref{sj.lb} since $2>\gl^S(S)=\sum s_j 100^{-j}$.

Next, for each $S$ and $j$, remove all elements in $S_j$ for which $s_j <100^j/(100^4(j+1)^2)$. This process reduces the weight of $S$ by at most
\[\sum_{j \ge 0} 100^j/(100^4(j+1)^2)\cdot 100^{-j}<100^{-3}.\]

Finally, for each $j$ with $|S_j|>0$, remove elements from $S_j$ to round $|S_j|$ to the largest power of $100$ that is at most $|S_j|$. The weight of $S_j$ remains at least a $1/100$ fraction of its weight before removal of elements, yielding \eqref{wt.lb''}.
\end{proof}

We say $\sss=(s_0, s_1,\ldots, s_\tau)$ is a \textit{legal profile} if each nonzero $s_j \in \sss$ satisfies
\beq{legal4} \mbox{$\max\{1, 100^j/(100^4(j+1)^2)\} \le s_j<2\cdot100^j $}\enq
and
\beq{legal5}\mbox{$s_j$ is a power of $100$.} \enq
We will also need the notion of a \textit{partial legal profile} $\sss_b:=(s_j:j \le b)$, in which each nonzero $s_j$ satisfies \eqref{legal4} and \eqref{legal5}. Given $\sss$ or $\sss_b$, we use $\cF_\sss$ for the collection of $S$'s whose profile is $\sss$, and $\cF_{\sss_b}$ for the collection of $S$'s whose partial profile is $\sss_b$. Note that if $\sss$ restricted on $\{j \le b\}$ is $\sss_b$, then $\cF_\sss \sub \cF_{\sss_b}$.

The following definition is a key notion in our proof. 

\begin{mydef}\label{def.fragment} Given $S \in \cF_\sss$ and $W \in {X \choose w}$, we say $U \sub S \setminus W$ is an $(S,W)$-fragment (with index $b$) if the following holds: there is $b \in \{-1, 0, \ldots, \tau\}$ and $S' \in \cF_{\sss_b}$ such that
\beq{fragment1}  S'_j \sub W \cup U\quad  \forall j \le b;\enq
\beq{fragment2} \sum_{j \ge b+1} \lambda^{S'}(S'_j \cap (W \cup U)) \ge .01\left[\sum_{j \ge b+1} \gl^{S'}(S'_j)-100^{-b}\left (\sum_{j \le b} s_j -|U|\right)\right]. \enq

\end{mydef}

\nin For example, $S \setminus W$ is an $(S,W)$-fragment with index $\tau$.

\begin{remark}
A similar notion of fragment is used in \cite{ALWZ, FKNP, KNP} and in the recent resolution of the Kahn--Kalai conjecture~\cite{KKC}, in which the definition of fragment is much simpler. The more delicate definition of fragment, as in Definition \ref{def.fragment}, is crucial for current setting of Theorem \ref{Conj 5.7''}.
\end{remark}

 Given $S$ and $W$, we denote by $T=T(S,W)$ the \textit{minimum} $(S,W)$-fragment, where here minimum refers to the index $b$ first (breaking ties arbitrarily), and then $|T|$ (again, breaking ties arbitrarily). We use $t=t(S,W)$ for $|T(S,W)|$. For any pair $(S,W)$, its minimum fragment $T$ and the corresponding index $b_T = b(S,W)$ are uniquely determined.

The following definition will be crucial in the construction of $\cU(W)$, a cover of $\cF$.

\begin{mydef}\label{feasible} Given $Z \sub X$, $b \in [\tau]$, $t \ge 0$, and a partial profile $\sss_b$, we say $S' \in \cF_{\sss_b}$ is $(Z,b,\sss_b,t)$-feasible if
\beq{feasible1}  S'_j \sub Z \quad \forall j \le b; \enq

\beq{feasible2} \sum_{j \ge b+1} \gl^{S'}(S'_j \cap Z) \ge .01\left[\sum_{j \ge b+1} \gl^{S'}(S'_j)-100^{-b}\left (\sum_{j \le b} s_j -t\right)\right].\enq

\end{mydef}

\nin The definitions of minimum fragment and feasibility are closely related, as shown in Propositions \ref{obs.T} and \ref{obs.T'}.

\begin{prop}\label{obs.T} Let $S\in \cF_\sss$. Let  $T$ be the minimum $(S,W)$-fragment with index $b_T$, and $t=|T|$. If $S'$ is $(W \cup T,b_T, \sss_{b_T},t)$-feasible, then 
\beq{obs.T1} (\bigcup_{j \le b_T} S'_j) \setminus W = T.\enq

\end{prop}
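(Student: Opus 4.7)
The plan is to show that $U:=(\bigcup_{j\le b_T}S'_j)\setminus W$ is itself an $(S,W)$-fragment with index $b_T$, witnessed by the same $S'$; minimality of $T$ then forces $U=T$, which is exactly the desired identity \eqref{obs.T1}. The containment $U\sub T$ comes for free from feasibility of $S'$: \eqref{feasible1} applied with $Z=W\cup T$ gives $S'_j\sub W\cup T$ for all $j\le b_T$, hence $U\sub T\sub S\setminus W$ (so in particular $U$ is a legitimate candidate for a fragment). Moreover, the first fragment condition \eqref{fragment1} for the pair $(U,S')$ holds by the very definition of $U$.

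The heart of the argument is to check fragment condition \eqref{fragment2} for $U$ using \eqref{feasible2} for $S'$ with parameter $t$. I would compare the two LHSs directly: since $T\cap W=\0$, the only difference is the term
\[
\sum_{j\ge b_T+1}\lambda^{S'}\bigl(S'_j\cap (T\setminus U)\bigr),
\]
while the two RHSs differ by $.01\cdot 100^{-b_T}(t-|U|)$. The key observation is that the weights in the missing LHS sum satisfy $\lambda^{S'}(i)=100^{-j}\le 100^{-(b_T+1)}$ for $i\in S'_j$ with $j\ge b_T+1$, so the missing mass is at most $100^{-(b_T+1)}(t-|U|)$. The dyadic identity $100^{-(b_T+1)}=.01\cdot 100^{-b_T}$ is engineered precisely so that this upper bound exactly matches the drop on the RHS, and hence \eqref{fragment2} transfers from $T$ to $U$ with no loss.

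Once $U$ is established as an $(S,W)$-fragment of index $b_T$, minimality of $T$ finishes the proof: among all $(S,W)$-fragments of index $b_T$, $T$ has minimum size, so $|U|\ge t$. Combined with $U\sub T$ and $|T|=t$, this gives $U=T$.

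I do not anticipate a major obstacle---the statement is essentially designed so that the minimum fragment is rigid against perturbations of the witness from $S''$ (the witness realizing $T$) to another feasible $S'$. The only subtle point is the weight bookkeeping in the previous paragraph, which is where the choice of base $100$ (as opposed to a smaller constant) in Observation \ref{reduction1} is being used.
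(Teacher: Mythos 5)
Your proof is correct and follows essentially the same strategy as the paper's. The one stylistic difference: the paper establishes $T \subseteq \bigcup_{j\le b_T}S'_j$ by removing one offending element $x$ of $T$ at a time (first those in $T\setminus S'$, then those in $T\cap S'_{j_0}$ with $j_0\ge b_T+1$), verifying that $T\setminus\{x\}$ is still a fragment and hence contradicting the minimality of $T$. You instead show directly that $U=(\bigcup_{j\le b_T}S'_j)\setminus W$ is itself an $(S,W)$-fragment with index $b_T$ and then invoke minimality once to get $|U|\ge t$. Both arguments hinge on the same weight bound $\gl^{S'}(i)\le 100^{-(b_T+1)}=.01\cdot 100^{-b_T}$ for $i\in S'_j$ with $j\ge b_T+1$, which is exactly what lets the drop in the LHS of \eqref{fragment2} (from $T$ to $U$) be absorbed by the $|U|$-dependent slack on the RHS. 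Your batch version is arguably a touch cleaner since it avoids the case split on where the removed element of $T$ lives (outside $S'$ versus in a high-index $S'_{j_0}$), but the underlying idea is identical.
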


\begin{proof}
Note that, by the definition of feasibility, $S'$ satisfies \eqref{fragment1} and \eqref{fragment2} with $U=T$ and $b=b_T$; that $(\bigcup_{j \le {b_T}} S'_j)  \setminus W \sub T$ follows from this definition. To show $T \sub \bigcup_{j \le {b_T}}  S'_j$, first observe that by minimality of $|T|$, $T \sub S'$. Indeed, if $x \in T \setminus S'$, then replacing $T$ by $T \setminus \{x\}$, one can check that $S'$ still satisfies \eqref{fragment1} and \eqref{fragment2}. Suppose there is $x \in T \cap S'_{j_0}$ for some $j_0 \ge b_T+1$. Note that
\beq{x.wt} \gl^{ S'}(x) \le 100^{-(b_T+1)}. \enq
Then $T':=T \setminus \{ x\}$ is an $(S,W)$-fragment, since $S'$ trivially satisfies \eqref{fragment1}, and 
\[\begin{split} \sum_{j \ge b_T+1} \gl^{S'}(S'_j \cap (W \cup T')) &\ge \sum_{j \ge b_T+1} \gl^{S'}(S'_j \cap (W \cup T)) -\gl^{S'}(x)\\
& \stackrel{\eqref{feasible2}, \eqref{x.wt}}{\ge} .01\left[\sum_{j \ge b_T+1} \gl^{S'}(S'_j)-100^{-b_T}\left (\sum_{j \le b_T} s_j -|T|\right)-100^{-b_T}\right]\\
& = .01\left[\sum_{j \ge b_T+1} \gl^{S'}(S'_j)-100^{-b_T}\left (\sum_{j \le b_T} s_j -|T'|\right)\right]. \end{split}\]
This contradicts the minimality of $T$.
\end{proof}

\begin{prop}\label{obs.T'} Let $S\in \cF_\sss$. Let  $T$ be the minimum $(S,W)$-fragment with index $b_T$, and $t=|T|$. If $S'$ is $(W \cup T,b_T, \sss_{b_T},t)$-feasible, then 
\beq{obs.T3} t \ge .9\sum_{j \le b_T}s_j.\enq

\end{prop}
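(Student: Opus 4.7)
The plan is to argue by contradiction: assume $t < 0.9 \sum_{j \le b_T} s_j$ and derive the existence of an $(S,W)$-fragment with index $b_T - 1$, which would contradict the minimality of $b_T$. First observe that the hypothesis forces $b_T \ge 0$, since otherwise $\sum_{j \le b_T} s_j = 0$ and the claim holds trivially.

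The candidate fragment will be the very same set $T$, paired with the same $S'$ that witnesses feasibility. Note that if $S' \in \cF_{\sss_{b_T}}$ then automatically $S' \in \cF_{\sss_{b_T-1}}$, since the partial profile up to $b_T - 1$ is just the restriction of $\sss_{b_T}$. The containment requirement \eqref{fragment1} at the new index, namely $S'_j \sub W \cup T$ for all $j \le b_T - 1$, is inherited directly from the feasibility condition \eqref{feasible1}.

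The core of the argument is to verify the weighted inequality \eqref{fragment2} at the new index $b = b_T - 1$. The key observation is that the term $j = b_T$ now migrates from the ``big'' sum $\sum_{j \ge b+1}$ to the ``small'' sum $\sum_{j \le b}$; since $S'_{b_T} \sub W \cup T$ by feasibility, it contributes its full weight $\gl^{S'}(S'_{b_T}) = s_{b_T}\cdot 100^{-b_T}$ to the left-hand side. Simultaneously the budget term on the right-hand side changes from $100^{-b_T}(\sum_{j \le b_T} s_j - t)$ to $100\cdot 100^{-b_T}(\sum_{j \le b_T - 1} s_j - t)$, a shift that gains a factor of $100$ but loses the $s_{b_T}$ contribution inside the sum. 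Plugging the feasibility bound \eqref{feasible2} into the LHS and collecting $100^{-b_T}$ prefactors, verification of \eqref{fragment2} reduces to the algebraic inequality
\[
0.98\, s_{b_T} + 0.99 \sum_{j \le b_T - 1} s_j \;\ge\; 0.99\, t,
\]
which in turn follows from $t < 0.9 \sum_{j \le b_T} s_j$ because $0.9 \cdot 0.99 = 0.891 < 0.98$. This furnishes an $(S,W)$-fragment of index $b_T - 1$, contradicting the choice of $b_T$ as the minimum index.

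The main obstacle is simply the bookkeeping: carefully tracking how the threshold $100^{-b}$, the running sum $\sum_{j \le b} s_j$, and the partition of weight into ``above $b$'' and ``at or below $b$'' pieces all shift simultaneously when $b$ decreases by one. The reason the proof succeeds is that the geometric factor $100$ gained in the budget term dominates the $0.01$ loss factor inherited from \eqref{feasible2}, leaving a comfortable gap that is enough to absorb any $t$ bounded away from $\sum_{j \le b_T} s_j$ by a constant factor.
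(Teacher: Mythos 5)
Your proof is correct and takes a genuinely different, and in fact simpler, route than the paper. The paper first finds, via a telescoping argument, an index $b' \le b_T$ where the ``local density'' $\sum_{b'\le j\le b_T} t_j 100^{-j}$ drops below $.9 \sum_{b'\le j\le b_T} s_j 100^{-j}$, then constructs a strict subset $T' = \bigcup_{j\le b'-1}T_j$ as a new fragment at index $b'-1$; this construction leans on Proposition \ref{obs.T} to pin down $T$ as $(\bigcup_{j\le b_T}S'_j)\setminus W$ and to define $t_j$, and on the consequence \eqref{lb1}. You bypass all of this by keeping $T$ and $S'$ unchanged and only decrementing the index from $b_T$ to $b_T - 1$. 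Tracking the change in \eqref{fragment2}, the term $j=b_T$ moves from $\sum_{j\le b}$ into $\sum_{j\ge b+1}$ as $b$ decreases (your phrasing has this migration backwards, but this is only a slip of the tongue), contributing its full weight $s_{b_T}100^{-b_T}$ on the left since $S'_{b_T}\subseteq W\cup T$, while the budget coefficient jumps from $100^{-b_T}$ to $100^{-(b_T-1)}$ and loses $s_{b_T}$ inside the sum. Working through the algebra — which I verified — the required inequality is exactly $0.98\,s_{b_T} + 0.99\sum_{j\le b_T-1}s_j \ge 0.99\,t$, and this follows from the contradiction hypothesis $t < 0.9\sum_{j\le b_T}s_j$ since $0.9\cdot 0.99 = 0.891 < 0.98$. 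This exhibits an $(S,W)$-fragment with index $b_T-1 < b_T$, contradicting the minimality of the index $b_T$. Your argument is shorter, avoids the telescoping step entirely, and does not even invoke Proposition \ref{obs.T}, so it is a cleaner path to the same proposition; the tradeoff is that the paper's version produces a fragment whose index may be much smaller than $b_T-1$, but as you note, any index strictly below $b_T$ suffices for the contradiction.
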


\begin{proof}
Let $T_{j}=S'_{j}\cap T$ and $t_j=|T_j|$ (so $t=\sum_{j \le b} t_j$ by Proposition \ref{obs.T}). We will prove the proposition by contradiction, showing that the failure of \eqref{obs.T3} violates the minimality of $T$.

First observe that, if we assume  $ t < .9\sum_{j \le b_T}s_j$, then there exists $b' \le b_T$ such that
\beq{t.le.s}
\sum_{b'\le j\le b_T}t_j 100^{-j}<.9\sum_{b' \le j \le b_T}s_{j}100^{-j}.
\enq
Indeed, if $(u_{b'}:=)  \sum_{b' \le j \le b_T} 100^{-j}(t_j-.9s_j) \ge 0$ for all $b' \le b_T$, then
\[
\sum_{j \le b_T}(t_j-.9s_{j})=\sum_{i\le b_T-1}100^{i}(u_{i}-u_{i+1})+100^{b_T}u_{b_T} = \sum_{i=1}^{b_T} (100^{i}-100^{i-1})u_i + u_0 \ge 0.
\]
Note that \eqref{t.le.s} gives, by Proposition \ref{obs.T},
\beq{lb1} \sum_{b' \le j \le b_T} \gl^{S'}(S'_j \cap W) \ge .1 \sum_{b' \le j \le b_T} \gl^{S'}(S'_j).\enq

Now we claim that
\beq{claim} \mbox{$T':=\bigcup_{j \le b'-1} T_j$ is an $(S,W)$-fragment with index $b'-1$,}
\enq
which contradicts the minimality of $T$.

\begin{proof}[Proof of \eqref{claim}]
$S'$ clearly satisfies property \eqref{fragment1}. For \eqref{fragment2},
\begin{align*}
 & \sum_{j \ge b'}\gl^{S'}(S'_j \cap (W \cup T'))\\
 &\stackrel{\eqref{obs.T1}}{=} \sum_{j \ge b_T+1}\gl^{S'}(S'_j \cap (W \cup T))+\sum_{b' \le j \le b_T}\gl^{S'}(S'_j \cap W)\\
 & \stackrel{\eqref{fragment2}, \eqref{lb1}}{ \ge} .01\left[\sum_{j \ge b_T+1}\gl^{S'}(S'_j)-100^{-b_T} \left(\sum_{j \le b_T}s_j-t\right)\right]+.1\sum_{b' \le j \le b_T}\gl^{S'}(S'_j)\\
 & = .01\sum_{j \ge b'}\gl^{S'}(S'_j)-100^{-(b_T+1)}\left(\sum_{j \le b_T}s_{j}-t\right)+.09\sum_{b' \le j \le b_T} s_j100^{-j}\\
 & \stackrel{(\dagger)}{\ge}.01\left[\sum_{j \ge b'}\gl^{S'}(S'_j)-100^{-(b'-1)}\left(\sum_{j\le b'-1}s_{j}-t'\right)\right],
\end{align*}
where $(\dagger)$ follows from the inequalities
\[.09\sum_{b' \le j \le b_T} s_j100^{-j} \ge 100^{-(b_T+1)}\sum_{b' \le j \le b_T} s_j; \mbox{ and}\]
\[(100^{-b'}-100^{-(b_T+1)})\sum_{j \le b'-1} s_j \stackrel{\eqref{obs.T1}}{\ge} (100^{-b'}-100^{-(b_T+1)})t' \ge 100^{-b'}t'-100^{-(b_T+1)}t. \qedhere\]
\end{proof} 
This completes the proof of the proposition.
\end{proof}

\begin{prop}\label{obs.B.nonempty}
If $W$ is bad, then for any $S$, we have $b_T \ge 0$.
\end{prop}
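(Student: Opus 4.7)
The plan is to argue by contradiction: suppose $b_T(S,W) = -1$ for some $S$, and I will derive that $W$ must be good. By the definition of the minimum $(S,W)$-fragment, there must exist $U \subseteq S \setminus W$ and $S' \in \cF_{\sss_{-1}}$ satisfying \eqref{fragment1} and \eqref{fragment2} with $b = -1$. Since the partial profile $\sss_{-1}$ is empty, no constraint is imposed on $S'$: we have $\cF_{\sss_{-1}} = \cF$, and condition \eqref{fragment1} is vacuous. The entire content of the fragment property at $b=-1$ is therefore the single inequality \eqref{fragment2}.

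Next I would unpack that inequality. On the right-hand side, $100^{-b} = 100$ and $\sum_{j \le -1} s_j = 0$, so the bracketed expression becomes $\gl^{S'}(S') - 100\cdot(0 - |U|) = \gl^{S'}(S') + 100|U|$, giving RHS $= 0.01\,\gl^{S'}(S') + |U|$. For the left-hand side, since $U \subseteq S \setminus W$ is disjoint from $W$ and every weight satisfies $\lambda^{S'}(i) \le 1$ by \eqref{wt.ass1}, I would estimate
\[
\sum_{j \ge 0} \lambda^{S'}(S'_j \cap (W \cup U)) = \gl^{S'}(S' \cap W) + \gl^{S'}(S' \cap U) \le \gl^{S'}(S' \cap W) + |U|.
\]

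Cancelling $|U|$ from both sides of \eqref{fragment2} and invoking the preprocessed lower bound $\gl^{S'}(S') \ge 100^{-4}$ from \eqref{wt.lb''} yields $\gl^{S'}(S' \cap W) \ge 0.01\cdot 100^{-4} = 100^{-5} = 10^{-10}$, which directly contradicts the badness of $W$. There is no genuine obstacle here — the whole argument is a definition-chase — but the calibration is tight: the threshold $10^{-10}$ in the definition of bad and the weight floor $100^{-4}$ produced by the reductions in Observation \ref{obs2} are matched precisely so that $0.01 \cdot 100^{-4} = 10^{-10}$. The one place to be careful is the sign of the term $-100^{-b}\bigl(\sum_{j \le b} s_j - |U|\bigr)$, which at $b = -1$ flips to produce the key $+|U|$ contribution that cancels against the trivial upper bound on the left-hand side.
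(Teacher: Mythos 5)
Your argument is correct and reaches the same inequality $\gl^{S'}(S' \cap W) \ge 0.01\,\gl^{S'}(S') \ge 10^{-10}$ as the paper, but via a slightly different route. The paper first invokes Proposition \ref{obs.T}: at $b_T=-1$ its conclusion $(\bigcup_{j\le b_T}S'_j)\setminus W = T$ reads $\emptyset = T$, so the minimum fragment is empty, and substituting $U=T=\emptyset$ into \eqref{fragment2} immediately gives $\gl^{S'}(S'\cap W)\ge 0.01\,\gl^{S'}(S')$. You instead work with an arbitrary $U$ and observe the exact cancellation of the $|U|$ terms: the factor $100^{-b}=100$, after multiplication by $0.01$ on the right-hand side of \eqref{fragment2}, produces precisely $|U|$, which is matched on the left by the crude bound $\gl^{S'}(S'\cap U)\le |U|$ (valid since every weight is at most $1$ after \eqref{wt.ass1}, and $W,U$ are disjoint because $U\subseteq S\setminus W$). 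Your version is marginally more self-contained in that it does not route through Proposition \ref{obs.T} — you extract exactly the tiny piece of content needed — at the small cost of one extra estimate. Both versions rest on the same calibration, $0.01 \cdot 100^{-4} = 10^{-10}$, between the badness threshold and the weight floor \eqref{wt.lb''} produced by Observation \ref{obs2}.
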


\begin{proof} If $b_T=-1$, then by Proposition \ref{obs.T} we have $T=\emptyset$. Then by \eqref{fragment2}, there is $S'$ for which
\[\gl^{S'}(S' \cap W) \ge  .01 \gl^{S'}(S')  \ge 10^{-10}, \]
which contradicts the fact that $W$ is bad.
\end{proof}

\mn

Now we construct $\cU(W)$, the promised cover of $\cF$. The following construction is valid for all $W \in {X \choose w}$, but in the end we will be only interested in bad $W$'s. 

For $W \in {X \choose w}$, define $\cU=\cU(W)$ to be
\[\cU(W):=\left\{T(S,W): S \in \cF \right\}.\]
Note that $\cU(W)$ covers $\cF$ since $T(S,W) \sub S$ for each $S \in \cF$.

The following lemma shows that if we only consider bad $W$'s, then the overall cost (in \eqref{p-small}) for the covers is cheap. Recall that $n=|X|$ and $w=Jnp$.

\begin{lemma}\label{ML}
We have
\[\sum_{\text{$W$ bad}}\,\,\sum_{U \in \cU(W)}p^{|U|} \le J^{-c}{n \choose w}\] 
for some constant $c>0$.
\end{lemma}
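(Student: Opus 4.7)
The plan is to bound $\sum_{W \text{ bad}} \sum_{U \in \cU(W)} p^{|U|}$ by switching the order of summation and parameterizing each fragment $U$ by the structural data of a minimum fragment. For a pair $(W, U)$ with $W$ bad and $U \in \cU(W)$, pick any $S \in \cF$ realizing $U = T(S, W)$; let $b = b_T$ and $\sss_b = (s_0, \dots, s_b)$ be its index and partial profile, and set $A := \bigcup_{j \le b} S_j$ (of size $a := \sum_{j \le b} s_j$). Proposition \ref{obs.T} yields $A \setminus W = U$ and Proposition \ref{obs.T'} yields $|U| \ge 0.9\,a$. I will organize the sum by running over tuples $(b, \sss_b, A, \pi)$ -- with $\pi = (A_0, \dots, A_b)$ an ordered partition of $A$ matching $\sss_b$ -- and then over bad $W$'s compatible with the tuple; this overcounts each $(W,U)$ but remains an upper bound.

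The decisive input from $W$ bad is that $\sum_j \lambda^{S'}(S'_j \cap W) < 10^{-10}$ for \emph{every} $S' \in \cF$, which yields the per-level cap
\[
c_j := |S_j \cap W| \le 10^{-10} \cdot 100^j =: C_j \qquad \text{for each } j \le b.
\]
For $j \le 5$ this forces $c_j = 0$ (so $S_j \subseteq U$). For fixed $(b, \sss_b, A, \pi)$, summing first over intersection profiles $(c_j)$ with $c_j \le C_j$ and compatible $W$'s, and then over $(A, \pi)$, should produce $\binom{n}{w}$ multiplied by a product $\prod_j f_j$ of truncated binomial-type factors $f_j \approx \frac{(w/J)^{s_j}}{s_j!} \sum_{c_j \le C_j} \binom{s_j}{c_j} J^{c_j}$, after using $w = Jnp$ and the standard estimate $\binom{n-a}{w-k}/\binom{n}{w} \lesssim (Jp)^k (1 - Jp)^{a-k}$. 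Comparing $f_j$ with its untruncated counterpart $(w(1+1/J))^{s_j}/s_j!$ via a Chernoff tail bound gives $f_j$ smaller by a factor of order $J^{-(s_j - C_j)}$, polynomial factors in $s_j/C_j$ aside, whenever $C_j < s_j$. The lower bound $s_j \ge 100^j / (100^4 (j+1)^2)$ from \eqref{sj.lb}, combined with the weight bound \eqref{wt.lb''}, should guarantee that any legal partial profile $\sss_b$ includes at least one level carrying a nontrivial saving, yielding a uniform $J^{-c_0}$ per tuple; the outer sum over $(b, \sss_b)$ then contributes only a $\mathrm{poly}(\tau) = \mathrm{poly}(\log n)$ factor, absorbed by the saving for $K$ large.

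The main obstacle I expect is using the global nature of "$W$ bad" effectively. The condition "$W$ bad" constrains $W$ simultaneously against every $S' \in \cF$, while the naive per-tuple analysis only exploits the caps $c_j \le C_j$ arising from the single $S$ chosen to parameterize $U$; the extreme case where every $S \in \cF$ is a singleton $\{x_S\}$ shows that exploiting only the per-tuple constraint gives a vacuous bound, whereas tracking that bad $W$'s must avoid $\{x_S : S \in \cF\}$ simultaneously recovers a $J^{-1}$-type saving. Securing a universal $c > 0$ in general thus requires delicately combining the truncation bounds with the global badness condition, and in particular handling profiles whose weight is concentrated at large $j$ (where $C_j$ may approach $s_j$) either by a more refined tail analysis at such levels or by aggregating small savings across several levels. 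Balancing the size constraint \eqref{sj.lb}, the weight bound \eqref{wt.lb''}, and the thresholds $C_j$ uniformly in $n$, $p$, and the profile is the technical heart of the argument.
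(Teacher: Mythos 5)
Your approach is genuinely different from the paper's, and as sketched it has a real gap: the overcounting over the level sets $A$ is not controlled.

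The paper's proof is a compression/encoding argument. For a fixed $(b, \sss_b, t)$, it maps each pair $(W,T)$ with $T = T(S,W)$ to $Z := W \cup T$ of size $w+t$, of which there are at most $\binom{n}{w+t} \le \binom{n}{w}(Jp)^{-t}$. The crucial step is that given $Z$, one chooses a $(Z,b,\sss_b,t)$-feasible $\hat S$ arbitrarily but \emph{depending only on $Z$}; then Proposition~\ref{obs.T} guarantees $T \subseteq \bigcup_{j\le b}\hat S_j$, a fixed set of size $n_b$, so there are at most $2^{n_b}$ candidates for $T$ and hence for $(W,T)$. Combined with $t\ge .9n_b$ (Proposition~\ref{obs.T'}), this gives the factor $(Jp)^{-t}\cdot p^t \cdot 2^{n_b} \le (J/4)^{-.9n_b}$. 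Badness is used only via Proposition~\ref{obs.B.nonempty} to force $b\ge 0$; it plays no further role in the count.

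Your plan instead enumerates all possible $(A,\pi)$ with $A=\bigcup_{j\le b}S_j$ and sums over compatible bad $W$. But $A$ ranges over $\binom{n}{n_b}$-many subsets of $X$, not over subsets of a fixed $n_b$-element universe, and this overcount is not recovered by the truncation $c_j \le C_j$. Concretely, your own formula $f_j \approx \frac{(w/J)^{s_j}}{s_j!}\sum_{c_j\le C_j}\binom{s_j}{c_j}J^{c_j}$ has leading factor $(w/J)^{s_j}/s_j! = (np)^{s_j}/s_j!$, which is $\gg 1$ whenever $s_j\lesssim np$; so $\prod_j f_j \le J^{-c}$ fails. You correctly sense trouble (your singleton example), but the diagnosis — ``using the global nature of badness'' — is not what is missing. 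What is missing is precisely the $Z = W\cup T$ encoding together with the feasibility device (Definition~\ref{feasible} and Propositions~\ref{obs.T}--\ref{obs.T'}): this replaces the unbounded sum over $A$ by a $2^{n_b}$ union bound over subsets of $\bigcup_{j\le b}\hat S_j$, where $\hat S$ is deterministically recovered from $Z$. Without this step the argument does not close.
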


 Note that Lemma \ref{ML}, combined with \eqref{STS.ass}, implies that there is a bad $W$ for which
\[\sum_{U \in \cU(W)} p^{|U|} \le (10/9)J^{-c}<1/2.\]
This gives \eqref{STS} and thus concludes the proof of Theorem \ref{Conj 5.7''}.

\begin{proof}[Proof of Lemma \ref{ML}]
Let a bad $W$ be given. For each $b$, a legal partial profile $\sss_b$ and $t \ge 0$, let
\[\cG_W(\sss_b,t)=\{S:S \in \cF_{\sss_b}, t(S,W)=t, b(S,W)=b\};\]
and
\[\cU_W(\sss_b,t)=\{T(S,W):S \in \cG_W(\sss_b,t)\}.\]
Then we have, with $n_b:= \sum_{j \le b}s_j$ for given $\sss_b$,
\beq{final} \sum_{\text{$W$ bad}}\,\,\sum_{U \in \cU(W)} p^{|U|} \le \sum_{b \ge 0} \,\, \sum_{\sss_b \text{ legal}} \,\, \sum_{.9n_b \le t \le n_b} \,\, \sum_{\text{$W$ bad}} \,\, \sum_{U \in  \cU_W(\sss_b,t)} p^{|U|},\enq
noting that the range $b \ge 0$ follows from Proposition \ref{obs.B.nonempty}, and the range of $t$ is from Propositions \ref{obs.T} and \ref{obs.T'}.

Given $b\ge 0$, $\sss_b$ and $t$, we bound  $\sum_{\text{$W$ bad}} \sum_{U \in  \cU_W(\sss_b,t)} p^{|U|}$ as follows:

\begin{enumerate}[Step 1.]
\item Pick $Z:=W \cup T$. Since $|Z|=w+t$ (note $W$ and $T$ are always disjoint), the number of possibilities for $Z$ is at most (recalling $w=Jpn$)
\[{n \choose w+t} = {n \choose w} \cdot \prod_{j=0}^{t-1}\frac{n-w-j}{w+j+1} \le {n\choose w} (Jp)^{-t}.\]

\item Given $Z$, by the definition of fragment, there must exist a choice of $(Z, b, \sss_b, t)$-feasible $\hat S$. Make a choice of $\hat S$ (arbitrarily) such that it only depends on $Z$. 
Then Proposition \ref{obs.T} enables us to specify $T$ as a subset of $\bigcup_{j\le b}\hat S_j$, whose number of possibilities is at most $2^{n_b}$.
\end{enumerate}

Therefore, the right-hand side of \eqref{final} is at most
\[\begin{split} \sum_{b \ge 0} \,\, \sum_{\sss_b \text{ legal}} \,\, \sum_{.9n_b \le t \le n_b} \,\, \sum_{\text{$W$ bad}} \,\, \sum_{U \in  \cU_W(\sss_b,t)} p^t & \le  \sum_{b \ge 0} \,\, \sum_{\sss_b \text{ legal}} \,\, \sum_{.9n_b \le t \le n_b} {n \choose w}J^{-t}2^{n_b}\\
& \le {n \choose w} \sum_{b \ge 0}\,\, \sum_{\sss_b \text{ legal}} (J/4)^{-.9n_b}.\\
\end{split}\]
Notice that, by \eqref{legal4} and \eqref{legal5}, the number of possibilities for legal $\sss_b$ is at most
\[ \prod_{j \le b} \log_{100}(2\cdot100^4(j+1)^2).\]
On the other hand, using the lower bound on $s_j$ in \eqref{legal4},
\[n_b\ge \sum_{j \le b} \max\{1, 100^j/(100^4(j+1)^2)\}.\]
Therefore, with
\beq{aj}a_j:=\log_{100}[2\cdot 100^4(j+1)^2]\cdot(J/4)^{-.9\max\{1, 100^j/(100^4(j+1)^2)\}},\enq
we have
\[\sum_{b \ge 0}\sum_{\sss_b \text{ legal}} (J/4)^{-.9n_b}\le \prod_{j \in [\tau]}(1+a_j)-1 \le \exp\left(\sum_{j \in [\tau]}a_j\right)-1 \le J^{-c}\]
for some positive constant $c$, assuming that $J$ is chosen sufficiently large. 
\end{proof}

\section{Proof of Theorem \ref{thm:pos-emp}}\label{sec.Thm1.3}
\subsection{A version of Theorem \ref{Conj 5.7} with multiplicities}\label{sec.3.1}
As a key tool in the proof of Theorem \ref{thm:pos-emp}, we need a version of Theorem \ref{Conj 5.7} for multisets. Given a finite set $X$, a \textit{multiset} over $X$ is a function $m:X\to \mathbb{N}_0$ (so $m(i)$ is the ``multiplicity'' of $i \in X$). Let $\cM(X)$ denote the collection of multisets over $X$. The \textit{size} of a multiset is given by $|m|:=\sum_{i\in X}m(i)$. Let $\cM_N(X)$ be the collection of multisets over $X$ of size $N$. For multisets $S$ and $T$, we say that $T$ is a \textit{subset} of $S$ and denote $T\subseteq S$ if $T(x)\le S(x)$ for all $x$. Given a collection $\cF$ of multisets over $X$, we say that $\cG$ is a \textit{cover} of $\cF$ if for every $S\in \cF$, there exists $T\in \cG$ satisfying $T\subseteq S$. 
Given two multisets $S$ and $T$, we define the \textit{union} $S\vee T$ as the multiset with $(S \vee T)(i) = \max(S(i),T(i))$ and the \textit{intersection} $S \wedge T$ as the multiset with $(S\wedge T)(i)=\min(S(i),T(i))$. We also define the \emph{disjoint union} $S+T$ as the multiset with $(S+T)(i)=S(i)+T(i)$, and the \textit{set difference} $S\setminus T$ as the multiset with $(S\setminus T)(i) = \max(S(i)-T(i),0)$. For an element $x\in X$ and a multiset $S\in \cM(X)$, we say that $x\in S$ if $S(x)>0$ and $x\notin S$ if $S(x)=0$.

We next give an overview of our proof of Theorem \ref{thm:pos-emp}, which requires several additional ideas. First, we need the following multiset and weighted generalization of Theorem \ref{Conj 5.7}. 

\begin{theorem}\label{thm:multi-w}
There exist $L>0$ with the following property. Consider any finite set $X$, and any collection $\Lambda$ of sequences $\bgl=(\gl_i)_{i \in X}$ with $\gl_i \ge 0$. Let $\mu:X\to \mathbb{R}_{\ge 0}$ be a probability measure on $X$. Let $N$ be a positive integer. Let $\WW$ be a random multiset of size $N$ over $X$ where for each multiset $W$ of size $N$, 
\[
\pr[\WW=W] = N! \prod_{x\in X}\frac{\mu(x)^{W(x)}}{W(x)!}.
\]
Then the family 
\beq{multi-col}
\left\{S \in \cM(X):\sup_{\bgl \in \Lambda} \sum_{i\in X} S(i) \gl_i \ge L\EE \sup_{\bgl \in \Lambda} \sum_{i \in X} \WW(i)\gl_i\right\}
\enq
admits a cover $\cG$ with 
\beq{eq.multi-w-cost}
\sum_{G\in \cG} \prod_{x\in X}\frac{(eN\mu(x))^{G(x)}}{G(x)!} \le 1/2.
\enq
\end{theorem}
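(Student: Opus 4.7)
The plan is to follow the blueprint of Section \ref{main} in the multiset setting. I will first reformulate Theorem \ref{thm:multi-w} as the promised Theorem \ref{thm:multi'-w} (the multiset analogue of Theorem \ref{Conj 5.7'}): if $\cF \subseteq \cM(X)$ admits no cover $\cG$ with $\sum_{G\in\cG}\prod_x (eN\mu(x))^{G(x)}/G(x)! \le 1/2$, and if each $S \in \cF$ carries weights $\lambda^S(i) \ge 0$ with $\sum_i S(i)\lambda^S(i) \ge 1$, then $\EE\sup_{S \in \cF}\sum_i (S \wedge \WW)(i)\lambda^S(i) \ge c$ for an absolute constant $c>0$, where $\WW$ is a random multiset of size $\Theta(N)$ (the appropriate analogue of the uniform subset of size $\lfloor K|X|p\rfloor$ used in Theorem \ref{Conj 5.7''}). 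The derivation of Theorem \ref{thm:multi-w} from Theorem \ref{thm:multi'-w} would then parallel the two short reductions carried out in Section \ref{red}; the substantive work is proving Theorem \ref{thm:multi'-w}.

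To prove Theorem \ref{thm:multi'-w}, I adapt the machinery of Section \ref{main}. First, I pre-process the weights as in Observations \ref{reduction1} and \ref{obs2}: cap at $1$, round to dyadic powers of $100$, and prune each $S$ so that the induced slices $S_j := \{i \in S : \lambda^S(i) = 100^{-j}\}$ (themselves multisets now) have total multiplicity in the range $[100^j/(100^4(j+1)^2), 2\cdot 100^j]$ and, where nonzero, are powers of $100$. I then introduce the multiset analogue of an $(S, W)$-fragment, a multiset $U \subseteq S \setminus W$ for which there exist $b \in \{-1, 0, \ldots, \tau\}$ and a witness $S' \in \cF_{\sss_b}$ satisfying $S'_j \subseteq W \vee U$ for all $j \le b$ together with a weight inequality analogous to \eqref{fragment2}. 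The minimum fragment $T(S, W)$, ordered first by $b$ and then by total multiplicity $|T|$, is defined exactly as before, and Propositions \ref{obs.T}, \ref{obs.T'}, and \ref{obs.B.nonempty} should carry over with only notational changes, since their arguments manipulate weights rather than set-theoretic cardinality.

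The cover $\cU(W) = \{T(S, W) : S \in \cF\}$ and the counting argument of Lemma \ref{ML} require the most adaptation. Step 1, which in the set case uses $\binom{n}{w+t}/\binom{n}{w} \le (Jp)^{-t}$, is replaced in the multiset case by the analogous ratio of multinomial probabilities; applying the Stirling-type bound $\binom{N}{k} \le (eN/k)^k$ at this point is precisely the source of the factor $e$ inside $(eN\mu(x))^{G(x)}/G(x)!$ appearing in the conclusion. Step 2 (enumerating $T$ inside $\bigcup_{j \le b}\hat{S}_j$) transfers verbatim since $\hat{S}_j$ remains a multiset of bounded total multiplicity. The summation over legal partial profiles then yields the same $J^{-c}$ decay via the estimate $\prod_{j\le\tau}(1+a_j)-1$. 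The principal technical obstacle is the combinatorial bookkeeping of multisets: one must ensure that the multinomial coefficients telescope cleanly against the product cost weights $\prod_x(eN\mu(x))^{G(x)}/G(x)!$, and absorb the mild discrepancy between the multinomial and Poisson models (at most a $\Theta(1)$ factor) without degrading constants.
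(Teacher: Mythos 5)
Your overall plan matches the paper's: reformulate as Theorem \ref{thm:multi'-w}, pre-process weights as in Observations \ref{reduction1}--\ref{obs2}, introduce multiset fragments with index $b$ and a minimum fragment $T(S,W)$, carry over Propositions \ref{obs.T}--\ref{obs.B.nonempty}, and run the union-bound accounting of Lemma \ref{ML}. Two points, however, are not handled correctly.

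First, in your definition of fragment you write $S'_j \subseteq W \vee U$, but the paper uses the disjoint union $W + U$ (multiplicities added, not maxed), and this is not cosmetic. The entire counting in Step~1 proceeds by decomposing $Z = W + T$, and one needs $W$ to be uniquely recoverable from $Z$ and $T$ as $W = Z \setminus T$; with $W\vee T$ this decomposition is not injective and the weights no longer telescope against $\pr[\WW=W]$. Correspondingly, the minimum-fragment proposition gives $(\bigvee_{j\le b_T}S'_j)\setminus W = T$, and that identity relies on $+$ rather than $\vee$ in the feasibility conditions.

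Second, your claim that ``Step~2 transfers verbatim'' is false, and this is where the genuinely new ingredient lives. After rewriting $\pr[\WW = Z\setminus T]/\pr[\ZZ = Z]$ one is left with factors $\binom{Z(x)}{T(x)}$, where $Z(x) = W(x) + T(x)$ and a priori $W(x)$ can be as large as $|\WW| = KN$; these do not stay bounded in terms of $n_b$ without further information. The paper resolves this with an extra proposition (Proposition \ref{obs.max}): if $T(x) > 0$ for some $x$, then minimality of $T$ forces $W(x) < \hat S(x)$ for the chosen feasible witness $\hat S$. Only after this observation can one bound $\binom{Z(x)}{T(x)} \le \binom{\hat S(x) + T(x)}{T(x)} \le 2^{2\hat S(x)}$ and absorb the product into $2^{O(n_b)}$. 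Your proposal flags the bookkeeping as the ``principal technical obstacle'' but offers no mechanism, and asserting verbatim transfer elides exactly the step that requires a new idea. The Stirling bound $\binom{N}{k}\le (eN/k)^k$ you invoke is also not where the factor $e$ enters the paper's computation; the $e$ lives in the target cost $\prod_x (eN\mu(x))^{G(x)}/G(x)!$ itself, and in the proof of Lemma \ref{ML-multi-w} it surfaces via $M!/(M+t)! \le M^{-t}$ combined with the $\prod_x (e\mu(x))^{T(x)}/T(x)!$ factor, producing the $(e/K)^t$ gain.
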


Note that for any positive integer $N$, the random multiset $\WW$ induced by $N$ independent samples from $\mu$ has distribution 
\[
\pr[\WW=W] = N! \prod_{x\in X}\frac{\mu(x)^{W(x)}}{W(x)!}.
\]
In particular, for any positive integer $N$, the above distribution is a valid probability distribution. 

In the next subsection, we discuss the reduction of Theorem \ref{thm:pos-emp} to Theorem \ref{thm:multi-w}. We first replace the distribution $\nu$ underlying the empirical process by an appropriate distribution $\mu$ over a finite set. For an appropriate large integer $P$, we partition the Polish space $\mathbb{T}$ into subsets $B_1 \dots B_P$ such that for each function $f\in \cF$, $f$ is approximately constant on each set. We then define the measure $\mu$ by setting $\mu(h)=\nu(B_h)$ for each $h\in [P]$. For each function in $\cF$, we construct a sequence $\lambda$ on $[P]$ by choosing an arbitrary element $x_h\in B_h$ and set $\lambda(h)=f(x_h)$. Let $\Lambda$ be the collection of sequences $\lambda$ corresponding to functions $f\in \cF$. 

To illustrate the connection to selector processes, observe that in the case $\nu$ is continuous, i.e. $\nu(\{x\})=0$ for any $x\in \mathbb{T}$, we can guarantee a partition as above in which $\mu(h) \in [1/(2P),1/P]$ for all $h$. In this case, the empirical process given by $N$ independent samples of $\nu$ is closely connected to the selector process on $[P]$ with parameter $p=N/P$  indexed by $\Lambda$. In a previous version of this paper, we give a proof of this special case following a simplified version of our strategy for Theorem \ref{thm:pos-emp} utilizing multisets. %
The follow-up work of Bednorz, Martynek and Meller \cite{BMM} shows that this case can be treated without resorting to multiset generalization. In particular, in this case, one observes that for $P$ large a small cover can be constructed for the small probability event that an element appears more than once in $N$ samples of $\mu$. 

In the general case where $\nu$ has atoms, this observation no longer holds: %
repeated appearance of elements in samples from $\nu$ is no longer a rare event. Multisets arise and as such, the multiset generalization, Theorem \ref{thm:multi-w}, of Theorem \ref{Conj 5.7} is natural in this context. We prove Theorem \ref{thm:pos-emp} via the discretization of $\nu$ into $\mu$, and translate the desired conclusion into the multiset covering in Theorem \ref{thm:multi-w}. %
We emphasize that this entire plan presents no additional conceptual difficulty. Indeed, we shall present the entire proof of Theorem \ref{thm:multi-w} in parallel with the proof of Theorem \ref{Conj 5.7''}. One shall see that the only difference is the bookkeeping of combinatorial factors that arise due to the atomic nature of $\nu$.

As will be evident in the proof of Theorem \ref{thm:pos-emp} from Theorem \ref{thm:multi-w}, it suffices in (\ref{multi-col}) to consider only multisets $S \in \cM_N(X)$ of fixed size $N$. However, it turns out that it is not important to keep track of this restriction, and in fact, in proving Theorem \ref{thm:multi-w}, we will go through Theorem \ref{thm:multi'-w}, which works instead with random multisets of size $KN$ for an appropriate constant $K$. Thus, we find it more convenient and general to work with $S\in \cM(X)$ in (\ref{multi-col}). 

In Section \ref{sec:discont}, we prove Theorem \ref{thm:pos-emp} assuming Theorem \ref{thm:multi-w}, which requires careful construction of events $E_{g,t}$ in Theorem \ref{thm:pos-emp} from the cover $\cG$ in Theorem \ref{thm:multi-w}. Finally, in Section \ref{app:pos-emp-w}, we give the full details of the proof of Theorem \ref{thm:multi-w}, which follows along the line of the proof of Theorem \ref{Conj 5.7''}.%

\subsection{Proof of Theorem \ref{thm:pos-emp} assuming Theorem \ref{thm:multi-w}}\label{sec:discont}
Since $\EE[\sup_{f\in \cF}Z_f] \in (0,\infty)$, by renormalizing, we can assume that 
\beq{normalization'} \EE[\sup_{f\in \cF}Z_f] = 1. \enq Let $\epsilon>0$ be sufficiently small. Let $\cF = \{f_1 \dots f_M\}$. 

Let $U = \max_{i\le M} \sup f_i$, and note from (\ref{normalization'}) that $U\ge 1$. Partition $\mathbb{T}$ into sets of the form $I(k_1\dots k_M):=\bigcap_{i=1}^{M} f_i^{-1}([k_i\epsilon,(k_i+1)\epsilon))$, where $k_i \in [0,\lfloor U/\epsilon\rfloor]$ are integers. 
Define a new probability distribution $\mu$ on $\Omega:=\{(k_1\dots k_M):k_i \in [0,\lfloor U/\epsilon\rfloor]\}$ with $\mu(k_1\dots k_M)= \nu(I(k_1\dots k_M))$ and define $X_1 \dots X_N$ as independent samples from the distribution $\mu$. For each $i\in [M]$, define $\lambda^i:\Omega\to \mathbb{R}_{\ge 0}$ by $\lambda^i(k_1\dots k_M)=k_i\epsilon$, and let $\Lambda = \{\lambda^i:i\in [M]\}$. Note that for each $y\in I(k_1\dots k_M)$, 
\beq{bound-eps}
|f_i(y)-\lambda^i(k_1\dots k_M)|\le \epsilon.
\enq 

Given independent samples $X_1\dots X_N$ from $\mu$, the multiset given by $X_1\dots X_N$ is a multiset $\WW$ of size $N$ with distribution
\[
\pr[\WW=W] = N! \prod_{x\in \Omega}\frac{\mu(x)^{W(x)}}{W(x)!}.
\]

By Theorem \ref{thm:multi-w}, we can find a collection $\cG \subseteq \cM(\Omega)$ with 
\beq{cost-b}
\sum_{G\in \cG} \prod_{x\in \Omega}\frac{(eN\mu(x))^{G(x)}}{G(x)!} \le 1/2,
\enq
and furthermore $\cG$ covers 
\beq{col}
\left\{S \in \cM(\Omega):\sup_{\bgl \in \Lambda} \sum_{x\in \Omega} S(x) \gl_x \ge L\EE \sup_{\bgl \in \Lambda} \sum_{x \in \Omega} \WW(x)\gl_x\right\}.
\enq
Notice that $\frac{(eN\mu(x))^{G(x)}}{G(x)!} \ge \left(\frac{eN\mu(x)}{G(x)}\right)^{G(x)}$. As such, by removing elements $x\in G$ in which $N\mu(x)/G(x)>1$ from each $G\in \cG$ (after which $\cG$ remains a cover of (\ref{col}) and the left hand side of (\ref{cost-b}) decreases), we can assume without loss of generality that 
\beq{le-1}
N\mu(x)/G(x) \le 1 \textrm{ for all $G\in \cG$ and $x\in G$}.
\enq 

For each $G\in \cG$, let $\tilde{g}_G:\Omega\to \mathbb{R}$ be the function $\tilde{g}_G(x)=\log (N\mu(x)/G(x))$ for $x\in G$ and $\tilde{g}_G(x)=0$ otherwise. Let $\tilde{t}_G=\sum_{x\in G} G(x)\log (N\mu(x)/G(x))$. Then, using (\ref{le-1}), for any $(X_1\dots X_N)$ for which the corresponding multiset $W$ contains $G$, we have that 
\[
\prod_{i: X_i\in G} \frac{N\mu(X_i)}{G(X_i)} = \prod_{x\in G} \left(\frac{N\mu(x)}{G(x)}\right)^{W(x)} \le \prod_{x\in G} \left(\frac{N\mu(x)}{G(x)}\right)^{G(x)},
\]
so $(X_1\dots X_N)$ is contained in the event 
\begin{equation}\label{eq:tildeH}
\cH_{\tilde{g}_G,\tilde{t}_G}=\left\{\sum_{i=1}^{N}\tilde{g}_G(X_i) \le \tilde{t}_G\right\}.
\end{equation} 

We compute 
\begin{align}
\EE\left[\exp\left(-\sum_{i=1}^{N} \tilde{g}_G(X_i)\right)\right] &= \left(\mathbb{E}\left[\exp(-\tilde{g}_G(X_1))\right]\right)^{N}\nonumber\\
&= \left(\sum_{x\in G} \mu(x) \frac{G(x)}{N\mu(x)} + \sum_{x\notin G} \mu(x)\right)^{N} \nonumber\\
&\le (1+|G|/N)^{N}. \label{eq:E}
\end{align}
Note that $-\tilde{g}_G(x) \ge 0$ for all $x$. Hence, by Markov's Inequality and (\ref{eq:E}),
\begin{align}
\pr[\cH_{\tilde{g}_G,\tilde{t}_G}] &= \pr\left[\exp\left(-\sum_{i=1}^{N} \tilde{g}_G(X_i)\right) \ge \exp(-\tilde{t}_G)\right] \nonumber \\
&\le \exp(\tilde{t}_G)  (1+|G|/N)^{N} \nonumber \\
&= (1+|G|/N)^{N} \prod_{x\in G}\left(\frac{N\mu(x)}{G(x)}\right)^{G(x)}. 
\end{align}
We also have $N\log(1+|G|/N) \le |G|$, and thus 
\begin{align}
\pr[\cH_{\tilde{g}_G,\tilde{t}_G}] &\le (1+|G|/N)^{N} \prod_{x\in G}\left(\frac{N\mu(x)}{G(x)}\right)^{G(x)} \nonumber \\
&\le \prod_{x\in G}e^{G(x)} \left(\frac{N\mu(x)}{G(x)}\right)^{G(x)} \nonumber \\
&\le \prod_{x\in G} \frac{(eN\mu(x))^{G(x)}}{G(x)!}, \label{eq.final-H}
\end{align}
where in the last inequality we use the estimate $G(x)! \le G(x)^{G(x)}$.

From (\ref{eq.final-H}) and (\ref{eq.multi-w-cost}), we have 
\begin{equation}
\sum_{G\in \cG}\pr[\cH_{\tilde{g}_G,\tilde{t}_G}] \le 1/2.
\end{equation}

Finally, for $y\in \mathbb{T}$, define $\pi:\mathbb{T}\to \Omega$ by $\pi(y)=(k_1\dots k_M)$ for $y\in I(k_1 \dots k_M)$. Observe that for independent samples $(Y_1 \dots Y_N)$ from $\nu$, defining $X_j = \pi(Y_j)$ for $j\in [N]$ and $\mathbf{W}$ the multiset given by $X_1 \dots X_N$, then $(X_1 \dots X_N)$ are independent samples from $\mu$. Furthermore, by (\ref{bound-eps}), for all $i\in [M]$,
\[
\left|Z_{f_i}(Y_1 \dots Y_N) - \frac{1}{N}\sum_{x\in \Omega} \mathbf{W}(x)\lambda^i(x)\right|\le \epsilon,
\] 
and
\[
\left|\mathbb{E}\left[\sup_{i\in [M]}Z_{f_i}(Y_1 \dots Y_N)\right] - \frac{1}{N}\mathbb{E}\left[\sup_{i\in [M]}\sum_{x\in \Omega} \mathbf{W}(x)\lambda^i(x)\right]\right|\le \epsilon.
\]
As such, assuming that $\sup_{f\in \cF} Z_{f}(Y_1 \dots Y_N) \ge 2L\mathbb{E}\sup_{f\in \cF}Z_f$, recalling (\ref{normalization'}), for $\epsilon$ sufficiently small, we have that $\WW$ is contained in (\ref{col}) and hence covered by $\mathcal{G}$. Thus, by (\ref{eq:tildeH}), 
\[
\left\{\sup_{f\in \cF}Z_f\ge 2L\mathbb{E} \sup_{f\in \cF}Z_f\right\} \subseteq \bigcup_{G\in \cG} \pi^{-1}(\cH_{\tilde{g}_G,\tilde{t}_G}).
\]
Noting that $\pi^{-1}(\cH_{\tilde{g}_G,\tilde{t}_G})$ is exactly the same as the event $E_{g,t}$ for $g=-N\tilde{g}_G\circ\pi$ and $t=-\tilde{t}_G$ for each $G\in \cG$, this yields Theorem \ref{thm:pos-emp} (upon modifying the value of the constant $L$). 

\begin{remark}\label{rm:sym}
Theorem \ref{thm:multi-w} also directly implies the following statement. For any $c>0$, there exists $L>0$ such that the following holds. We say that a subset of $\mathbb{T}^k$ is symmetric if it is invariant under coordinate permutations. For each $k\ge 1$, there exists a symmetric set $V_{k} \subseteq \mathbb{T}^k$ such that, if $(Y_1 \dots Y_N)$ is contained in the event $\left\{\sup_{f\in \cF} Z_f \ge L \EE\sup_{f\in \cF} Z_f\right\}$, then there exists $i_1< \ldots <i_k$ such that $(Y_{i_1},\ldots,Y_{i_k}) \in V_{k}$. Furthermore, for each $k\ge 1$, $\pr\left((Y_1 \dots Y_k)\in V_{k}\right) \le \frac{1}{2} (ck/N)^k$. 

In particular, the above statement follows from Theorem \ref{thm:multi-w} by taking $V_k = \bigcup_{G\in \cG, |G|=k}V_k(G)$, where $V_k(G)$ is the collection of tuples $(y_1 \dots y_k)\in \mathbb{T}^{k}$ whose corresponding multiset is equal to $G$. 

This implies (a stronger version of) the result in \cite{Talagrand06} on positive empirical processes in the special case where the class of functions only involves indicator functions of sets, and generalizes this result to the setting of general nonnegative functions.  
\end{remark}

\subsection{Proof of Theorem \ref{thm:multi-w}}\label{app:pos-emp-w}
Theorem \ref{thm:multi-w} follows from Theorem \ref{thm:multi'-w} below. 
\begin{theorem}\label{thm:multi'-w}
There exist a positive integer $K$ with the following property. Consider any finite set $X$, and any family of multisets $\cF \sub \cM(X)$. Let $\mu:X\to \mathbb{R}_{\ge 0}$ be a probability measure on $X$. Let $N$ be a positive integer. Let $\WW$ be a random multiset of size $KN$ over $X$ where for each multiset $W$ of size $KN$, 
\[
\pr[\WW=W] = {(KN)!} \prod_{x\in X}\frac{\mu(x)^{W(x)}}{W(x)!}.
\]

Assume that for each $S \in \cF$ we are given a sequence $\bgl^S=(\gl^S(i))_{i \in X}$ with $\gl^S(i) \ge 0$ and
\beq{wt.lb'-multi} \sum_{i \in X} S(i) \gl^S(i) \ge 1. \enq
Then if there is no cover $\cG$ for $\cF$ with 
\[
\sum_{G\in \cG} \prod_{x\in X}\frac{(eN\mu(x))^{G(x)}}{G(x)!} \le 1/2,
\]
then we have
\beq{aim'-multi} \EE \sup_{S \in \cF} \sum_{i \in X} \WW(i) \gl^S(i) \ge 10^{-11}.\enq
\end{theorem}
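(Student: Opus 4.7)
The proof mirrors that of Theorem~\ref{Conj 5.7''} given in Section~\ref{main}, with multiset analogues throughout. We argue by contrapositive: assume no cover $\cG$ of $\cF$ satisfies $\sum_{G\in\cG}\prod_x\frac{(eN\mu(x))^{G(x)}}{G(x)!}\le 1/2$, and show $\EE\sup_{S\in\cF}\sum_i\WW(i)\lambda^S(i)\ge 10^{-11}$. To begin, we carry out multiset versions of the pre-processing in Observations~\ref{reduction1} and~\ref{obs2}: cap weights at $1$, round them to powers of $100$, and define the profile $\sss(S)=(s_0,\ldots,s_\tau)$ with $s_j:=\sum_{i:\lambda^S(i)=100^{-j}}S(i)$ now counting multiplicities. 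The greedy removal steps enforcing the legal profile conditions \eqref{legal4}-\eqref{legal5} and the truncation $\tau=O(\log|X|)$ transfer without change, since the calculations involve only sums $\sum_j s_j 100^{-j}$ which are unchanged in the multiset reading.

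Next we lift Definition~\ref{def.fragment} to the multiset setting: a sub-multiset $U$ of $S$ with $U\wedge W=\mathbf{0}$ is an $(S,W)$-fragment at index $b$ if there exists $S'\in\cF_{\sss_b}$ with $S'_j\subseteq W\vee U$ (coordinate-wise) for all $j\le b$ and the multiset translation of \eqref{fragment2}, read with weighted multiset sums in place of set sums. Feasibility and minimum fragments $T(S,W)$ are defined analogously, and Propositions~\ref{obs.T},~\ref{obs.T'}, and~\ref{obs.B.nonempty} transfer verbatim, as their proofs only manipulate sizes and weights arithmetically. Call $W$ \emph{bad} if $\sup_S\sum_i(W\wedge S)(i)\lambda^S(i)<10^{-10}$ and set $\cU(W):=\{T(S,W):S\in\cF\}$, which covers $\cF$; if the target expectation is below $10^{-11}$, then $W$ is bad with probability $\ge 9/10$.

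The main step is the multiset analogue of Lemma~\ref{ML}, which we aim to prove in the form
\begin{equation*}
\sum_{W\text{ bad}}\pr[\WW=W]\sum_{U\in\cU(W)}\prod_x\frac{(eN\mu(x))^{U(x)}}{U(x)!}\le K^{-c}
\end{equation*}
for a constant $c>0$ and $K$ taken sufficiently large. The new computation replaces the binomial ratio $\binom{n}{w+t}/\binom{n}{w}\sim(Jp)^{-t}$ from the set case by a multinomial counterpart. Writing $M:=KN$ and $Z:=W+T$ (disjoint multiset union, of size $M+t$), direct calculation yields the identity
\begin{equation*}
\pr[\WW=W]\cdot\prod_x\frac{(eN\mu(x))^{T(x)}}{T(x)!}=\frac{M!(eN)^t}{(M+t)!}\prod_x\binom{Z(x)}{T(x)}\cdot\pr_{M+t}[Z],
\end{equation*}
where $\pr_{M+t}[Z]$ is the multinomial probability of $Z$ at sample size $M+t$. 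The prefactor $\frac{M!(eN)^t}{(M+t)!}\le (e/K)^t$ is the multiset cost saving that plays the role of $J^{-t}$ in Lemma~\ref{ML}. Recasting the sum over valid $(W,T)$ via the $(Z,T)$-parametrization --- choosing $\hat S=\hat S(Z,b,\sss_b,t)$ as a function of $Z$ (Proposition~\ref{obs.T}) and restricting $T$ to sub-multisets of $\bigcup_{j\le b}\hat S_j(Z)$ (at most $2^{n_b}$ choices) --- and absorbing the combinatorial factors $\prod_x\binom{Z(x)}{T(x)}$ against the multinomial probabilities $\pr_{M+t}[Z]$ yields a bound of the form $2^{n_b}(e/K)^t$ per fixed $(b,\sss_b,t)$. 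The sum over legal profiles then reproduces the bound in Lemma~\ref{ML} and gives $K^{-c}$.

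Finally, averaging over bad $W$'s produces a single bad $W$ with $\sum_{U\in\cU(W)}\prod_x\frac{(eN\mu(x))^{U(x)}}{U(x)!}<1/2$, contradicting the assumption. The main obstacle is the multinomial bookkeeping in the third step --- particularly the absorption of $\prod_x\binom{Z(x)}{T(x)}$ against $\pr_{M+t}[Z]$ to isolate a clean cost saving on the order of $(e/K)^t$ --- but this is purely a translation of the combinatorial counting in the set case to the multiset/multinomial language, involving no new conceptual ingredients.
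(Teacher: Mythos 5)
Your high-level framework is the right one: pre-process to dyadic weights, lift fragments and minimum fragments to multisets, cover with $\cU(W)=\{T(S,W)\}$, and replace the set-case binomial ratio by the multinomial prefactor $\frac{M!(eN)^t}{(M+t)!}\le (e/K)^t$. But two interlocking details are off, and together they constitute a real gap in the multiset bookkeeping that you describe as ``purely a translation.''

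First, your fragment condition ``$U$ is a sub-multiset of $S$ with $U\wedge W=\mathbf{0}$'' does not work. With it, the basic example $S\setminus W$ (which must be a fragment at index $\tau$ so that minimum fragments always exist) can violate $U\wedge W=\mathbf{0}$: for an element $x$ with $0<W(x)<S(x)$, we have $(S\setminus W)(x)>0$ and $W(x)>0$. The paper's condition is instead $U\subseteq S\setminus W$ (i.e.\ $U(x)\le \max(S(x)-W(x),0)$), combined with the disjoint union $W+U$, and these allow $U$ and $W$ to share support.

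Second, once the correct condition is used, the binomial coefficient $\binom{Z(x)}{T(x)}$ with $Z=W+T$ is genuinely nontrivial (it does not collapse to $1$ as it would under $U\wedge W=\mathbf{0}$), and it is not controlled by ``absorbing against the multinomial probability.'' The sum over $Z\in\cM_{(M,t)}(X)$ cannot be extended to all $Z$ of size $M+t$ while decoupling $T$ from $Z$, because $T$ is constrained to sub-multisets of $\bigvee_{j\le b}\hat S_j(Z)$ with $\hat S$ a function of $Z$. What is actually needed is a new structural observation about minimum fragments, Proposition~\ref{obs.max} in the paper: for a minimum fragment $T$ with feasible witness $\hat S$, one has $W(x)<\hat S(x)$ at every $x$ with $T(x)>0$ (otherwise one copy of $x$ could be removed from $T$, contradicting minimality). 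This gives $Z(x)=W(x)+T(x)<\hat S(x)+T(x)\le 2\hat S(x)$, hence $\binom{Z(x)}{T(x)}\le 2^{2\hat S(x)}$ and $\prod_x\binom{Z(x)}{T(x)}\le 2^{2n_b}$. Combined with the $\le 2^{n_b}$ choices of $T\subseteq\bigvee_{j\le b}\hat S_j$, this yields the $(e/K)^t\cdot 2^{3n_b}$ bound per $(b,\sss_b,t)$. Without this minimality observation the factors $\binom{Z(x)}{T(x)}$ are unbounded, so the claim that the multiset step involves ``no new conceptual ingredients'' overlooks the one place where a genuinely new argument (beyond direct translation) is required.
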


\begin{proof}[Proof of Theorem \ref{thm:multi-w} from Theorem \ref{thm:multi'-w}]

By Theorem \ref{thm:multi'-w}, under the assumptions in Theorem \ref{thm:multi'-w}, a random multiset $\tilde{\WW}$ over $X$ of size $KN$ with distribution
\[
\pr[\tilde{\WW}=W] = {(KN)!}\prod_{x\in X}\frac{(\mu(x))^{W(x)}}{W(x)!}
\]
satisfies 
\[
\mathbb{E}\sup_{S\in \cF} \sum_{i\in X}\tilde{\WW}(i)\lambda^S(i) \ge 10^{-11}. 
\]
Let $\WW$ be a random multiset of size $N$ with distribution 
\[
\pr[\WW=W]=N!\prod_{x\in X}\frac{(\mu(x))^{W(x)}}{W(x)!}.
\]
Note that $\tilde{\WW}$ has the distribution of a multiset induced by a tuple of $KN$ independent samples from $\mu$, and $\WW$ has the distribution of a multiset induced by a tuple of $N$ independent samples from $\mu$. Hence, we have 
\beq{lower-e-W}
\mathbb{E}\sup_{S\in \cF} \sum_{i\in X}{\WW}(i)\lambda^S(i) \ge \frac{N}{KN}\mathbb{E}\sup_{S\in \cF} \sum_{i\in X}\tilde{\WW}(i)\lambda^S(i)\ge \frac{1}{K}\cdot 10^{-11}.
\enq
Theorem \ref{thm:multi-w} then follows as in the proof of Theorem \ref{Conj 5.7} from Theorem \ref{Conj 5.7'}. In particular, letting 
\[
\cF = \left\{S \in \cM(X):\sup_{\bgl \in \Lambda} \sum_{i\in X} S(i) \gl_i \ge L\EE \sup_{\bgl \in \Lambda} \sum_{i \in X} \WW(i)\gl_i\right\},
\]
for each $S\in \cF$, there is $(\tau^S(i))_{i\in X} \in \Lambda$ with 
\beq{lower-tau} \sum_{i\in X} S(i) \tau^S(i) \ge L\EE \sup_{\bgl \in \Lambda} \sum_{i \in X} \WW(i)\gl_i.\enq 
Define $\bgl^S$ by $\lambda^S(i)=\tau^S(i)$ if $i\in S$ and $\lambda^S(i)=0$ otherwise, then 
\beq{contra-1} \EE \sup_{\bgl \in \Lambda} \sum_{i \in X} \WW(i)\gl_i \ge \EE \sup_{S\in \cF} \sum_{i \in X} \WW(i)\gl^S(i).\enq
However, assume for the sake of contradiction that $\cF$ does not admit a cover satisfying (\ref{eq.multi-w-cost}), then by (\ref{lower-tau}) and (\ref{lower-e-W}), 
\beq{contra-2} \EE \sup_{S\in \cF} \sum_{i \in X} \WW(i)\gl^S(i) \ge \frac{1}{K}10^{-11}L\EE \sup_{\bgl \in \Lambda} \sum_{i \in X} \WW(i)\gl_i. \enq
For $L$ sufficiently large, (\ref{contra-1}) and (\ref{contra-2}) yield the desired contradiction. 
\end{proof}

The proof of Theorem \ref{thm:multi'-w} follows along the main ideas behind the proof of Theorem \ref{Conj 5.7''}. Below we give the full details to make transparent the parallel with the proof of Theorem \ref{Conj 5.7''}. We emphasize that only direct adaptations are needed and there is no conceptual difficulty in following the argument upon having the proof of Theorem \ref{Conj 5.7''}. %

We follow the definitions in Section \ref{sec.3.1}. In the following, denote $M=KN$, and $S,S',\hat{S}$ are always elements of $\cF$. We say that $W\in \cM_M(X)$ is \textit{good} if $\max_{S\in \cF} \sum_{i} W(i)\gl^S(i) \ge 10^{-10}$, and bad otherwise.

By the same processing steps as in Section \ref{sec.proof} (Observations \ref{reduction1} and \ref{obs2}), we can assume that $\gl^S(i) = 100^{-j}$ for some $j \in \mathbb N_0$, 
with the right hand side of \eqref{wt.lb'-multi} weakened to $100^{-1}$. Denote by $S_j$ the sub-multiset of $S$ consisting of all the elements $i$ with $\gl^S(i)=100^{-j}$. Let $s_j = |S_j|$. Then we can assume that for each $j \in \mathbb N_0$, 
either $s_j = 0$ or $100^{j-4}/(j+1)^2 \le s_j < 2\cdot 100^j$ and $s_j$ is a power of $100$, with the right hand side of \eqref{wt.lb'-multi} weakened to $100^{-3}$. Let $\tau=\max\{j:s_j \ne 0\}$. Note that $\tau <\infty$ since $\cF$ is finite.
The following definition is a straightforward generalization of Definition \ref{def.fragment} to the multiset case. 

\begin{mydef}\label{def.fragment.multi} Given $S \in \cF_\sss$ and $W \in \cM_M(X)$, we say $U\subseteq S\setminus W$ is an $(S,W)$-fragment (with index $b$) if the following holds: there is $b \in \{-1, 0, \ldots, \tau\}$ and $S' \in \cF_{\sss_b}$ such that
\beq{fragment1-multi}  S'_j \sub W + U\quad  \forall j \le b;\enq
\beq{fragment2-multi} \sum_{j \ge b+1} \lambda^{S'}(S'_j \wedge (W + U)) \ge .01\left[\sum_{j \ge b+1} \gl^{S'}(S'_j)-100^{-b}\left (\sum_{j \le b} s_j - |U|\right)\right]. \enq

\end{mydef}

Given $S$ and $W$, we denote by $T=T(S,W)$ the \textit{minimum} $(S,W)$-fragment, where here minimum refers to the index $b$ first (breaking ties arbitrarily), and then $|T|$ (again, breaking ties arbitrarily). We use $t=t(S,W)$ for $|T(S,W)|$. For any pair $(S,W)$, its minimum fragment $T$ and the corresponding index $b_T = b(S,W)$ are uniquely determined. 

Definition \ref{feasible} can be directly adapted to the current setting. 

\begin{mydef}\label{feasible-multi} Given $Z\in \cM(X)$, $b \in [\tau]$, $t \ge 0$, and a partial profile $\sss_b$, we say $S' \in \cF_{\sss_b}$ is $(Z,b,\sss_b,t)$-feasible if
\beq{feasible1-multi}  S'_j \sub Z \quad \forall j \le b; \enq

\beq{feasible2-multi} \sum_{j \ge b+1} \gl^{S'}(S'_j \wedge Z) \ge .01\left[\sum_{j \ge b+1} \gl^{S'}(S'_j)-100^{-b}\left (\sum_{j \le b} s_j -t\right)\right].\enq

\end{mydef}

\nin The following propositions generalize Propositions \ref{obs.T}, \ref{obs.T'} and \ref{obs.B.nonempty}. Their proofs are direct adaptations of the proofs in Section \ref{sec.proof} with little changes. We have included the details for completeness. 

\begin{prop}\label{obs.T-multi} For $S \in \cF_{\sss}$, let  $T$ be the minimum $(S,W)$-fragment with index $b_T$, and $t=|T|$. If $S'$ is $(W + T,b_T, \sss_{b_T},t)$-feasible, then 
\beq{obs.T1-multi} \left(\bigvee_{j \le b_T} S'_j\right) \setminus W = T.\enq

\end{prop}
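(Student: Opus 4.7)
The plan is to mirror the two-inclusion proof of Proposition \ref{obs.T}, translating set operations into their multiset counterparts. The ``easy'' direction $\left(\bigvee_{j \le b_T} S'_j\right) \setminus W \sub T$ is immediate from the feasibility condition (\ref{feasible1-multi}): coordinate-wise, $S'_j(x) \le W(x) + T(x)$ for every $x$ and every $j \le b_T$, so $(S'_j \setminus W)(x) \le T(x)$, and taking the coordinate-wise maximum over $j \le b_T$ preserves this bound.

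For the reverse inclusion $T \sub \left(\bigvee_{j \le b_T} S'_j\right) \setminus W$, I would argue by contradiction using the minimality of $T$. The strategy is to construct a strictly smaller $(S,W)$-fragment of the \emph{same} index $b_T$ whenever $T$ ``overshoots.'' Concretely, for a troublesome coordinate $x$, form $T' := T - \delta_x$ (decrementing the multiplicity of $x$ by one) and verify both (\ref{fragment1-multi}) and (\ref{fragment2-multi}) for $T'$. This splits into two cases, exactly paralleling the set-based proof.

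First, if some $T(x) > S'(x)$, then $S'_j(x) \le S'(x) \le T'(x) \le W(x) + T'(x)$ for every $j$, so (\ref{fragment1-multi}) is preserved, and furthermore $(S'_j \wedge (W+T))(x) = (S'_j \wedge (W+T'))(x) = S'_j(x)$. Hence the left-hand side of (\ref{fragment2-multi}) is literally unchanged, while the right-hand side only decreases when $|T'|=|T|-1$; this contradicts the minimality of $|T|$. So $T \sub S'$. Second, assuming $T \sub S'$, each $x$ with $T(x) \ge 1$ lies in a unique class $S'_{j(x)}$, and we must rule out $j(x) \ge b_T+1$. If such an $x$ exists, then $\lambda^{S'}(x) = 100^{-j(x)} \le 100^{-(b_T+1)} = 10^{-2}\cdot 100^{-b_T}$. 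Since $x \notin S'_j$ for $j \le b_T$, condition (\ref{fragment1-multi}) for $T'$ is trivial; and decrementing $T(x)$ changes $(S'_j \wedge (W+T))$ only in coordinate $x$ and only for $j=j(x)$, dropping the left side of (\ref{fragment2-multi}) by at most $\lambda^{S'}(x)$. This loss is exactly absorbed by the matching decrease of $10^{-2}\cdot 100^{-b_T}$ on the right-hand side when $|T|$ is replaced by $|T'|$, so $T'$ is again a fragment of index $b_T$ with $|T'| < |T|$, a contradiction.

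The only subtlety beyond the set case is the coordinate-wise bookkeeping of the wedge operation: one must check that decrementing $T(x)$ by one reduces $(S'_j \wedge (W+T))(x)$ by at most one, so the weighted loss in (\ref{fragment2-multi}) is controlled by $\lambda^{S'}(x)$. Once this is in hand, the telescoping $\lambda^{S'}(x) \le 10^{-2}\cdot 100^{-b_T}$ that drove the original proof carries over verbatim, and the multiset version is a direct transcription of its set-based counterpart.
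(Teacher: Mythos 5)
Your strategy --- decrement $T$ at a bad coordinate and contradict the minimality of $|T|$, in direct parallel with the set case --- is the paper's strategy, and both your forward inclusion and your second case (ruling out $j(x)\ge b_T+1$ for $x\in T$) are correct. The gap is in your first case.

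You show only that $T(x)>S'(x)$ leads to a contradiction and conclude $T\subseteq S'$. But to finish you need the stronger fact $T\subseteq S'\setminus W$. After your two cases you know $T(x)\le S'(x)$ and, for $x$ with $T(x)>0$, that $(\bigvee_{j\le b_T}S'_j)(x)=S'(x)$; this gives $T(x)\le (\bigvee_{j\le b_T}S'_j)(x)$, not the required $T(x)\le \max\bigl((\bigvee_{j\le b_T}S'_j)(x)-W(x),\,0\bigr)$. Whenever $W(x)>0$, the range $\max(S'(x)-W(x),0) < T(x)\le S'(x)$ is not excluded by either of your cases, so \eqref{obs.T1-multi} does not follow. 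The repair is small but necessary: run Case~1 under the hypothesis $T(x) > (S'\setminus W)(x)=\max(S'(x)-W(x),0)$, which is the multiset unpacking of ``$x\in T\setminus(S'\setminus W)$'' exactly as in the paper. With $T'(x)=T(x)-1$ one then has $W(x)+T'(x)\ge S'(x)\ge S'_j(x)$, which still preserves \eqref{fragment1-multi} and still gives $(S'_j\wedge(W+T'))(x)=S'_j(x)$, so the rest of your Case~1 argument carries over verbatim. Your chain $S'(x)\le T'(x)$ is stronger than needed and is only available under the narrower hypothesis $T(x)>S'(x)$; replacing it by $S'(x)\le W(x)+T'(x)$ yields $T\subseteq S'\setminus W$, after which Case~2 and the concluding step recover the paper's proof.
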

\begin{proof}
Note that, by the definition of feasibility, $S'$ satisfies \eqref{fragment1-multi} and \eqref{fragment2-multi} with $U=T$ and $b=b_T$; that $(\bigvee_{j \le {b_T}} S'_j)  \setminus W \sub T$ follows from this definition. 

To show $T \sub (\bigvee_{j \le {b_T}}  S'_j) \setminus W$, first observe that by minimality of $|T|$, $T \sub S' \setminus W$. Indeed, if $x \in T \setminus (S'\setminus W)$, then replacing $T$ by $T \setminus \{x\}$, one can check that $S'$ still satisfies \eqref{fragment1-multi} and \eqref{fragment2-multi}. Suppose there is $x \in T \wedge S'_{j_0}$ for some $j_0 \ge b_T+1$. Note that
\beq{x.wt-w} \gl^{ S'}(x) \le 100^{-(b_T+1)}. \enq
Then $T':=T \setminus \{ x\}$ is an $(S,W)$-fragment, since $S'$ trivially satisfies \eqref{fragment1-multi}, and 
\[\begin{split} \sum_{j \ge b_T+1} \gl^{S'}(S'_j \wedge (W + T')) &\ge \sum_{j \ge b_T+1} \gl^{S'}(S'_j \wedge (W + T)) -\gl^{S'}(x)\\
& \stackrel{\eqref{feasible2-multi}, \eqref{x.wt-w}}{\ge} .01\left[\sum_{j \ge b_T+1} \gl^{S'}(S'_j)-100^{-b_T}\left (\sum_{j \le b_T} s_j -|T|\right)-100^{-b_T}\right]\\
& = .01\left[\sum_{j \ge b_T+1} \gl^{S'}(S'_j)-100^{-b_T}\left (\sum_{j \le b_T} s_j -|T'|\right)\right]. \end{split}\]
This contradicts the minimality of $T$.
\end{proof}

\begin{prop}\label{obs.T'-multi} For $S \in \cF_{\sss}$, let $T$ be the minimum $(S,W)$-fragment with index $b_T$, and $t=|T|$. If $S'$ is $(W + T,b_T, \sss_{b_T},t)$-feasible, then 
\beq{obs.T3-multi} t \ge .9\sum_{j \le b_T}s_j.\enq
 
\end{prop}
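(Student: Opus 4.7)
The plan is to adapt the proof of Proposition \ref{obs.T'} essentially verbatim, with the only modifications being the replacement of set operations by the corresponding multiset operations: intersection $\cap$ by meet $\wedge$, union $\cup$ by join $\vee$, and (where appropriate) disjoint union by the multiset $+$ operation. Since the statement concerns cardinalities and weighted sums indexed by the profile decomposition $S'_j$, and since Proposition \ref{obs.T-multi} already gives us the multiset analog of the identity $(\bigvee_{j \le b_T} S'_j) \setminus W = T$, all of the key combinatorial identities carry over.

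I would proceed by contradiction. Let $T_j = S'_j \wedge T$ and $t_j = |T_j|$; by Proposition \ref{obs.T-multi} (together with the fact that $T$ is disjoint from $W$), we have $t = \sum_{j \le b_T} t_j$. Assuming $t < .9 \sum_{j \le b_T} s_j$, the same Abel-summation argument as in the proof of Proposition \ref{obs.T'} produces some $b' \le b_T$ with
\[
\sum_{b' \le j \le b_T} t_j 100^{-j} < .9 \sum_{b' \le j \le b_T} s_j 100^{-j}.
\]
(The algebraic identity used there, expressing $\sum_{j \le b_T}(t_j - .9 s_j)$ as a nonnegative combination of partial tails, is purely numerical and has nothing to do with the set/multiset distinction.) From this, together with Proposition \ref{obs.T-multi}, which implies $t_j + |S'_j \wedge W| = s_j$ for $j \le b_T$, we obtain
\[
\sum_{b' \le j \le b_T} \gl^{S'}(S'_j \wedge W) \ge .1 \sum_{b' \le j \le b_T} \gl^{S'}(S'_j).
\]

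Next I would define $T' := \bigvee_{j \le b'-1} T_j$ (a sub-multiset of $T$, hence disjoint from $W$) and claim that $T'$ is an $(S,W)$-fragment with index $b'-1$, which contradicts the minimality of $b_T$. Condition \eqref{fragment1-multi} for $T'$ holds trivially from the definition. For condition \eqref{fragment2-multi}, I would split $\sum_{j \ge b'} \gl^{S'}(S'_j \wedge (W + T'))$ into the tail $j \ge b_T + 1$ part, which contributes $\sum_{j \ge b_T + 1} \gl^{S'}(S'_j \wedge (W + T))$ by definition of $T'$, plus the middle range $b' \le j \le b_T$ part, which contributes at least $\sum_{b' \le j \le b_T} \gl^{S'}(S'_j \wedge W)$. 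Using \eqref{fragment2-multi} applied to $T$ and $b_T$ for the tail, and the lower bound above for the middle range, the resulting expression can be compared against the required right-hand side $.01[\sum_{j \ge b'}\gl^{S'}(S'_j)-100^{-(b'-1)}(\sum_{j\le b'-1}s_{j}-|T'|)]$ via exactly the two auxiliary numerical inequalities $(\dagger)$ used in the proof of Proposition \ref{obs.T'}.

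The only mildly delicate step is confirming that Proposition \ref{obs.T-multi} suffices to replace the identity $T = \bigcup_{j \le b_T} S'_j \setminus W$ used throughout the original proof. Since $T$ is disjoint from $W$ by definition of fragment, and since $t_j = |S'_j \wedge T| = |S'_j| - |S'_j \wedge W| = s_j - |S'_j \wedge W|$ for $j \le b_T$ by Proposition \ref{obs.T-multi}, both the formula $\sum t_j = t$ and the complementarity used in step $(\dagger)$ remain valid. I do not anticipate any conceptual obstacle: this is a bookkeeping exercise in multiset arithmetic once Proposition \ref{obs.T-multi} is in hand.
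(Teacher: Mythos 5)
Your proof is correct and mirrors the paper's argument essentially verbatim. The one imprecision worth noting is your auxiliary remark that ``$T$ is disjoint from $W$'': in the multiset setting, $T \subseteq S\setminus W$ with $(S\setminus W)(x)=\max(S(x)-W(x),0)$ does \emph{not} force $T\wedge W=0$, since both $T$ and $W$ can have positive multiplicity at the same $x$. The identities you actually use, $t=\sum_{j\le b_T}t_j$ and $t_j+|S'_j\wedge W|=s_j$ for $j\le b_T$, do hold, but the correct justification is Proposition \ref{obs.T-multi} combined with the fact that the levels $S'_j$ have pairwise-disjoint supports (the weight $\gl^{S'}(x)$ determines the unique $j$ with $S'_j(x)>0$); this gives $T(x)=\max\bigl(S'_j(x)-W(x),0\bigr)$ pointwise for that unique $j\le b_T$, whence $\min(S'_j(x),T(x))+\min(S'_j(x),W(x))=S'_j(x)$, which is what both identities require. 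This does not affect the correctness of your proof, since the cited proposition carries the load.
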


\begin{proof}
Let $T_{j}=S'_{j}\wedge T$ and $t_j=|T_j|$ (so $t=\sum_{j \le b_T} t_j$ by Proposition \ref{obs.T-multi}). We will prove the proposition by contradiction, showing that the failure of \eqref{obs.T3-multi} violates the minimality of $T$.

First observe that, if we assume  $ t < .9\sum_{j \le b_T}s_j$, then there exists $b' \le b_T$ such that
\beq{t.le.s-multi}
\sum_{b'\le j\le b_T}t_j 100^{-j}<.9\sum_{b' \le j \le b_T}s_{j}100^{-j}.
\enq
Indeed, if $(u_{b'}:=)  \sum_{b' \le j \le b_T} 100^{-j}(t_j-.9s_j) \ge 0$ for all $b' \le b_T$, then
\[
\sum_{j \le b_T}(t_j-.9s_{j})=\sum_{i\le b_T-1}100^{i}(u_{i}-u_{i+1})+100^{b_T}u_{b_T} = \sum_{i=1}^{b_T} (100^{i}-100^{i-1})u_i + u_0 \ge 0.
\]
Note that \eqref{t.le.s-multi} gives, by Proposition \ref{obs.T-multi},
\beq{lb1-multi} \sum_{b' \le j \le b_T} \gl^{S'}(S'_j \wedge W) \ge .1 \sum_{b' \le j \le b_T} \gl^{S'}(S'_j).\enq

Now we claim that
\beq{claim-multi} \mbox{$T':=\bigvee_{j \le b'-1} T_j$ is an $(S,W)$-fragment with index $b'-1$,}
\enq
which contradicts the minimality of $T$.

\begin{proof}[Proof of \eqref{claim-multi}]
$S'$ clearly satisfies property \eqref{fragment1-multi}. For \eqref{fragment2-multi},
\begin{align*}
 & \sum_{j \ge b'}\gl^{S'}(S'_j \wedge (W + T'))\\
 &\stackrel{\eqref{obs.T1-multi}}{=} \sum_{j \ge b_T+1}\gl^{S'}(S'_j \wedge (W + T))+\sum_{b' \le j \le b_T}\gl^{S'}(S'_j \wedge W)\\
 & \stackrel{\eqref{fragment2-multi}, \eqref{lb1-multi}}{ \ge} .01\left[\sum_{j \ge b_T+1}\gl^{S'}(S'_j)-100^{-b_T} \left(\sum_{j \le b_T}s_j-t\right)\right]+.1\sum_{b' \le j \le b_T}\gl^{S'}(S'_j)\\
 & = .01\sum_{j \ge b'}\gl^{S'}(S'_j)-100^{-(b_T+1)}\left(\sum_{j \le b_T}s_{j}-t\right)+.09\sum_{b' \le j \le b_T} s_j100^{-j}\\
 & \stackrel{(\dagger)}{\ge}.01\left[\sum_{j \ge b'}\gl^{S'}(S'_j)-100^{-(b'-1)}\left(\sum_{j\le b'-1}s_{j}-t'\right)\right],
\end{align*}
where $(\dagger)$ follows from the inequalities
\[.09\sum_{b' \le j \le b_T} s_j100^{-j} \ge 100^{-(b_T+1)}\sum_{b' \le j \le b_T} s_j; \mbox{ and}\]
\[(100^{-b'}-100^{-(b_T+1)})\sum_{j \le b'-1} s_j \stackrel{\eqref{obs.T1-multi}}{\ge} (100^{-b'}-100^{-(b_T+1)})t' \ge 100^{-b'}t'-100^{-(b_T+1)}t. \qedhere\]
\end{proof} 
This completes the proof of the proposition.
\end{proof}

\begin{prop}\label{obs.B.nonempty-multi}
If $W$ is bad, then for any $S$, we have $b_T \ge 0$.
\end{prop}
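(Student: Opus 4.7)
My plan is to argue by contradiction, mirroring the proof of Proposition \ref{obs.B.nonempty} and replacing set operations with their multiset analogues. Suppose for some $S \in \cF$ the minimum $(S,W)$-fragment $T$ has index $b_T = -1$. By Definition \ref{def.fragment.multi}, there exists $S' \in \cF_{\sss_{-1}}$ witnessing that $T$ is a fragment, where $\sss_{-1}$ is the empty partial profile so $\cF_{\sss_{-1}} = \cF$. This $S'$ is in particular $(W + T, -1, \sss_{-1}, |T|)$-feasible, so Proposition \ref{obs.T-multi} applies and yields
\[
T \;=\; \left(\bigvee_{j \le -1} S'_j\right) \setminus W \;=\; \emptyset,
\]
the empty union of multisets being empty.

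Next I would substitute $b = -1$, $U = T = \emptyset$ into the fragment inequality \eqref{fragment2-multi}. Since $\sum_{j \le -1} s_j = 0$ and $|U| = 0$, the bracketed expression on the right reduces to $\sum_{j \ge 0}\lambda^{S'}(S'_j)$, and the inequality collapses to
\[
\sum_{j \ge 0} \lambda^{S'}(S'_j \wedge W) \;\ge\; .01 \sum_{j \ge 0} \lambda^{S'}(S'_j) \;=\; .01\, \lambda^{S'}(S').
\]
Using the post-preprocessing lower bound $\lambda^{S'}(S') \ge 100^{-3}$ recorded just before Definition \ref{def.fragment.multi}, this produces
\[
\sum_{i \in X} W(i)\, \lambda^{S'}(i) \;\ge\; \sum_{j \ge 0}\lambda^{S'}(S'_j \wedge W) \;\ge\; 10^{-8} \;>\; 10^{-10},
\]
which contradicts the definition of $W$ being bad.

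I do not anticipate any genuine obstacle here: the argument is a direct transliteration of the set-theoretic proof of Proposition \ref{obs.B.nonempty}, with $\cap$ replaced by $\wedge$ and $\cup$ by $+$. The only step that calls for a brief check is that Proposition \ref{obs.T-multi} is still applicable at $b_T = -1$; this is merely the conventions that the empty partial profile places no constraint (so $\cF_{\sss_{-1}} = \cF$) and that $\bigvee_{j \le -1}$ denotes the empty multiset, under which the proposition's hypotheses are automatically met by the $S'$ supplied by the fragment definition.
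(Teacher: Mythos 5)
Your proof is correct and follows essentially the same route as the paper: show $T = \emptyset$ via Proposition \ref{obs.T-multi}, then substitute $b=-1$, $U=\emptyset$ into \eqref{fragment2-multi} and invoke the post-preprocessing lower bound on $\gl^{S'}(S')$ to contradict badness of $W$. The only (harmless) cosmetic difference is that you track the lower bound as $\gl^{S'}(S') \ge 100^{-3}$ (giving $10^{-8}$), whereas the paper simply states the weaker $\ge 10^{-10}$, which is all that is needed.
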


\begin{proof} If $b_T=-1$, then by Proposition \ref{obs.T-multi} we have $T=\emptyset$. Then by \eqref{fragment2-multi}, there is $S'$ for which
\[\gl^{S'}(S' \wedge W) \ge  .01 \gl^{S'}(S')  \ge 10^{-10}, \]
which contradicts the fact that $W$ is bad.
\end{proof}

For $W \in \cM_M(X)$, define $\cU=\cU(W)$ to be
\[\cU(W):=\left\{T(S,W): S \in \cF \right\}.\]
Note that $\cU(W)$ covers $\cF$ since $T(S,W) \sub S$ for each $S \in \cF$.

We will also need the following simple observation.
\begin{prop}\label{obs.max}
For $S\in \cF_{\sss}$, let $T$ be the minimum $(S,W)$-fragment with index $b_T$, and $t=|T|$. If $S'$ is $(W+T,b_T,\sss_{b_T},t)$-feasible, then $W(x)< S'(x)$ for all $x$ with $T(x)>0$.  
\end{prop}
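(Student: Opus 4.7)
The plan is to derive Proposition \ref{obs.max} as a short direct consequence of Proposition \ref{obs.T-multi} together with the definition of set difference for multisets. First, I would invoke Proposition \ref{obs.T-multi} under the current hypotheses, which gives the identity
\[
\left(\bigvee_{j \le b_T} S'_j\right) \setminus W = T.
\]
Recalling that for multisets $(A \setminus B)(x) = \max(A(x) - B(x), 0)$, any $x$ with $T(x) > 0$ must satisfy $\bigl(\bigvee_{j \le b_T} S'_j\bigr)(x) > W(x)$.

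Next, I would observe that the weight classes $S'_j$ are sub-multisets of $S'$ (by construction, each $i$ in the support of $S'$ contributes to exactly one $S'_j$, namely the one indexed by $j$ with $\lambda^{S'}(i) = 100^{-j}$), so $S'_j \subseteq S'$ for every $j$, and therefore $\bigvee_{j \le b_T} S'_j \subseteq S'$ pointwise. Combining with the previous display yields
\[
S'(x) \;\ge\; \left(\bigvee_{j \le b_T} S'_j\right)(x) \;>\; W(x),
\]
which is exactly the claim $W(x) < S'(x)$ for every $x$ with $T(x) > 0$.

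There is no real obstacle here: the entire content is the translation between the multiset identity provided by Proposition \ref{obs.T-multi} and the pointwise definition of $\setminus$. The only thing to be careful about is the distinction between the disjoint union $+$ (used to define the fragment condition $S'_j \subseteq W + U$) and the join $\vee$ (appearing in the conclusion of Proposition \ref{obs.T-multi}); since $T$ is disjoint from $W$ by the definition of an $(S,W)$-fragment (namely $U \subseteq S \setminus W$), this distinction does not cause any trouble in our chain of inequalities. Thus the proposition follows immediately, and the proof can be written in a couple of lines.
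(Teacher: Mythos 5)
Your argument is correct, and it takes a genuinely different route from the paper's. The paper proves the proposition directly from minimality of $T$: assuming $W(x)\ge S'(x)$ for some $x$ with $T(x)>0$, one checks that decrementing $T$ at $x$ produces a multiset $T'$ for which $S'$ is still $(W+T', b_T, \sss_{b_T}, t-1)$-feasible, contradicting that $T$ has minimum size. Your proof instead takes Proposition \ref{obs.T-multi} as a black box: from $\left(\bigvee_{j\le b_T} S'_j\right)\setminus W = T$ and the pointwise definition of $\setminus$ you read off $\left(\bigvee_{j\le b_T} S'_j\right)(x)>W(x)$ whenever $T(x)>0$, and then bound $\bigvee_{j\le b_T} S'_j\le S'$ pointwise. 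Both proofs are a couple of lines; yours saves the reader from re-verifying feasibility, at the mild cost of leaning on the earlier proposition rather than arguing from first principles. Either is acceptable.

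One small flag on your closing remark: you justify ignoring the $+$ versus $\vee$ distinction by asserting that ``$T$ is disjoint from $W$'' because $U\subseteq S\setminus W$. In the multiset setting this inference is false: $T\subseteq S\setminus W$ only forces $T(x)\le \max(S(x)-W(x),0)$, which is perfectly compatible with $T(x)>0$ and $W(x)>0$ simultaneously (e.g.\ $S(x)=3$, $W(x)=1$, $T(x)=2$). Fortunately your actual chain of inequalities never invokes this disjointness --- it only uses the conclusion of Proposition \ref{obs.T-multi}, the pointwise formula for $\setminus$, and $S'_j\subseteq S'$ --- so the error is cosmetic, but the sentence should be deleted or corrected so as not to mislead.
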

\begin{proof}
Suppose that there is $x$ with $T(x)>0$ and $W(x)\ge S'(x)$. Then for $T'$ with $T'(y)=T(y)$ for $y\ne x$ and $T'(x)=T(x)-1$, we have that $S'$ is $(W+T',b_T,\sss_{b_T},t-1)$-feasible, contradicting minimality of $T$. 
\end{proof}

Choose an appropriate $c>0$ and sufficiently large $J_0>0$ so that, with (\textit{cf.} \eqref{aj})
\beq{aj'}a_j:=\log_{100}[2\cdot 100^4(j+1)^2]\cdot(e^{-4}J_0)^{-.9\max\{1, 100^j/(100^4(j+1)^2)\}},\enq
we have
\beq{Jc-w} \exp\left(\sum_{j \ge 0}a_j\right)-1 \le J_0^{-c}.\enq
Now we choose $K$ sufficiently large so that $K \ge J_0$. 

\begin{lemma}\label{ML-multi-w}
We have 
\[\sum_{\text{$W$ bad}}\pr[\WW=W]\sum_{U \in \cU(W)} \prod_{x\in X}\frac{(eN\mu(x))^{U(x)}}{U(x)!} \le J_0^{-c}\]
with $c$ and $J_0$ as in \eqref{Jc-w}.
\end{lemma}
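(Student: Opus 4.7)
The plan is to closely parallel the proof of Lemma \ref{ML}, replacing the set-theoretic change of variables $Z = W \cup T$ by the disjoint union of multisets $Z := W + U$, and translating the combinatorial enumeration into a probability bound that exploits the multinomial structure of $\WW$.

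First, stratify the left-hand side by the index $b$ (nonnegative by Proposition \ref{obs.B.nonempty-multi}), a legal partial profile $\sss_b$, and the fragment size $t$, which by Propositions \ref{obs.T-multi} and \ref{obs.T'-multi} satisfies $.9 n_b \le t \le n_b$ with $n_b := \sum_{j \le b} s_j$. For each pair $(W, U)$ with $W$ bad and $U \in \cU_W(\sss_b, t)$, put $Z := W + U$; a direct computation from the multinomial formulas yields
\[
\pr[\WW = W] \prod_{x\in X} \frac{(eN\mu(x))^{U(x)}}{U(x)!} = \frac{M!(eN)^t}{(M+t)!} \left(\prod_{x \in X} \binom{Z(x)}{W(x)}\right) \pr[\WW' = Z],
\]
where $\WW'$ is a random multiset of size $M + t$ with the analogous multinomial distribution. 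For each $Z$ choose a canonical $(Z, b, \sss_b, t)$-feasible $\hat S$ (such a choice exists because the witness $S'$ in the definition of the $(S, W)$-fragment is itself $(W + T, b, \sss_b, t)$-feasible). Writing $A := \bigvee_{j \le b} \hat S_j$, Proposition \ref{obs.T-multi} forces $U = A \setminus W$, so $W(x) = Z(x)$ whenever $Z(x) \ne A(x)$, while $W(x) \in \{0, \ldots, A(x)\}$ whenever $Z(x) = A(x)$. Summing $\prod_x \binom{Z(x)}{W(x)}$ over these options and using $\sum_{k = 0}^{A(x)} \binom{A(x)}{k} = 2^{A(x)}$ yields a factor of at most $2^{|A|} \le 2^{n_b}$. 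Combined with $\sum_Z \pr[\WW'=Z] = 1$ and $\frac{M!(eN)^t}{(M+t)!} \le (eN/M)^t = (e/K)^t$, this bounds the inner sum by $(e/K)^t 2^{n_b}$.

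Assembling and using $t \ge .9 n_b$, the overall sum is bounded by $\sum_b \sum_{\sss_b \text{ legal}} (0.1 n_b + 1)(C/K)^{.9 n_b}$ for an absolute constant $C$. Invoking the enumeration of legal partial profiles and the lower bound on $n_b$ exactly as in Lemma \ref{ML}, this reduces to $\prod_{j \in [\tau]}(1 + a_j) - 1$ with $a_j$ as in \eqref{aj'}; the assumption $K \ge J_0$ calibrates the base of the geometric decay to match \eqref{aj'}, and \eqref{Jc-w} then yields the claimed bound $J_0^{-c}$. The main obstacle is the change of variables: one has to correctly set up the probability identity and bound the number of valid decompositions $Z = W + U$ by $2^{n_b}$; in the multiset setting each coordinate in the support of $A$ contributes a factor of $A(x) + 1$ which must be absorbed via $A(x) + 1 \le 2^{A(x)}$, forcing $K$ to be chosen sufficiently large relative to $J_0$.
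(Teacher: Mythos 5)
Your proof is correct and follows essentially the same approach as the paper: stratify by $b$, legal partial profile $\sss_b$, and $t\in[.9n_b,n_b]$; change variables to $Z=W+U$ via the multinomial probability identity; choose a canonical $(Z,b,\sss_b,t)$-feasible $\hat S$ depending only on $Z$; and use Proposition~\ref{obs.T-multi} to pin down $U$ and control the multiplicity of the map $(W,U)\mapsto Z$. One small stylistic difference: the paper enumerates $T\subseteq\bigvee_{j\le b}\hat S_j$ directly and bounds each $\binom{Z(x)}{T(x)}\le 2^{2\hat S(x)}$ via Proposition~\ref{obs.max}, yielding a cruder total factor of $2^{3n_b}$, whereas you observe that the identity $U=A\setminus W$ (with $A:=\bigvee_{j\le b}\hat S_j$) forces $W(x)=Z(x)$ off $\{Z(x)=A(x)\}$, and the weighted sum of $\binom{Z(x)}{W(x)}$ over the remaining free coordinates is exactly $\prod_x 2^{A(x)}\le 2^{n_b}$; this is a mild tightening of the same computation, and Proposition~\ref{obs.max} is subsumed by the stronger fact $U=A\setminus W$. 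Everything else (absorbing the $t$-sum and the factor $2^{n_b}$ into $(e^{-4}J_0)^{-.9n_b}$ and then invoking the product bound over legal profiles) matches the paper.
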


\begin{proof}[Proof of Lemma \ref{ML-multi-w}]
For a given bad $W\in \cM_{M}(X)$ (recalling that $M=KN$), integer $b$, a partial profile $\sss_b$ and $t \ge 0$, let
\[\cG_W(\sss_b,t)=\{S:S \in \cF_{\sss_b}, t(S,W)=t, b(S,W)=b\};\]
and
\[\cU_W(\sss_b,t)=\{T(S,W):S \in \cG_W(\sss_b,t)\}.\]
Then we have, with $n_b := \sum_{j \le b}s_j$ for given $\sss_b$,
\begin{align}\label{final-multi'} &\sum_{\text{$W$ bad}}\pr[\WW=W]\sum_{U \in \cU(W)} \prod_{x\in X}\frac{(eN\mu(x))^{U(x)}}{U(x)!} \nonumber\\& \le \sum_{b \ge 0} \,\, \sum_{\sss_b \text{ legal}} \,\, \sum_{.9n_b \le t \le n_b} \,\, \sum_{\text{$W$ bad}} \,\, \sum_{U \in  \cU_W(\sss_b,t)} \pr[\WW=W] \prod_{x\in X}\frac{(eN\mu(x))^{U(x)}}{U(x)!},\end{align}
noting that the range $b \ge 0$ follows from Proposition \ref{obs.B.nonempty-multi}, and the range of $t$ is from Propositions \ref{obs.T-multi} and \ref{obs.T'-multi}.

Fix $\sss_b$ ($b \ge 0$) and $t$. Let $\ZZ$ be a random multiset in $\cM_{M+t}(X)$ with 
\[
\pr[\ZZ=Z] = (M+t)! \prod_{x\in X}\frac{\mu(x)^{Z(x)}}{Z(x)!}.
\]

Let $\cM_{(M,t)}(X)$ be the collection of $Z\in \cM(X)$ that can be written as $W+T$ for $W\in \cM_M(X)$ which is bad and $T\in \cU_W(\sss_b,t)$ has size $t$ (so $|Z|=M+t$). By the definition of fragment, for $Z\in \cM_{(M,t)}(X)$, there must exist a choice of $(Z, b, \sss_b, t)$-feasible $\hat S$. Make a choice of $\hat S=\hat S(Z)$ (arbitrarily) such that it only depends on $Z$. Then Proposition \ref{obs.T-multi} shows that $T \subseteq \bigvee_{j\le b}\hat S_j$. In particular, we have 
\beq{inj}
\sum_{\text{$W$ bad}} \,\, \sum_{U \in  \cU_W(\sss_b,t)}  \,\, \pr[\WW=W] \prod_{x\in X}\frac{(eN\mu(x))^{U(x)}}{U(x)!} \le \sum_{Z\in \cM_{(M,t)}(X)}\,\,\sum_{T\subseteq \bigvee_{j\le b}\hat S_j} \,\,\pr[\WW=Z\setminus T] \prod_{x\in X} \frac{(eN\mu(x))^{T(x)}}{T(x)!}.
\enq 
It remains to bound the right hand side:
\begin{align*}
&\sum_{Z\in \cM_{(M,t)}(X)}\,\, \sum_{T\subseteq \bigvee_{j\le b}\hat S_j} \pr[\WW=Z\setminus T]  \prod_{x\in X}\frac{(eN\mu(x))^{T(x)}}{T(x)!} \\
&=\sum_{Z\in \cM_{(M,t)}(X)}\,\,\sum_{T\subseteq \bigvee_{j\le b}\hat S_j} \pr[\ZZ=Z] \frac{\pr[\WW=Z\setminus T]}{\pr[\ZZ=Z]} \cdot \prod_{x\in X}\frac{(eN\mu(x))^{T(x)}}{T(x)!}\\
&=\sum_{Z\in \cM_{(M,t)}(X)}\pr[\ZZ=Z] \,\, \sum_{T\subseteq \bigvee_{j\le b}\hat S_j}  \,\,\frac{M!}{(M+t)!} \prod_{x\in X}\left(\mu(x)^{-T(x)}\frac{Z(x)!}{(Z(x)-T(x))!}\right) \cdot N^t\prod_{x\in X}\frac{(e\mu(x))^{T(x)}}{T(x)!}\\
&\le \sum_{Z\in \cM_{(M,t)}(X)}\pr[\ZZ=Z] \frac{N^t}{M^t}e^{t}\sum_{T\subseteq \bigvee_{j\le b}\hat S_j}  \,\, \prod_{x\in X}\binom{Z(x)}{T(x)}.
\end{align*}
Here, in the last inequality, we have used the trivial bound $M!/(M+t)!\le M^{-t}$. Furthermore, by Proposition \ref{obs.max}, we have $\hat S(x)>W(x)$ for all $x$ with $T(x)>0$. In particular, for $x$ with $T(x)>0$, we have $Z(x)=W(x)+T(x)<\hat S(x)+T(x)$, and thus 
\[
\binom{Z(x)}{T(x)}\le \binom{\hat S(x)+T(x)}{T(x)} \le 2^{\hat S(x)+T(x)}\le 2^{2\hat S(x)}.
\]
Hence, recalling that $M=KN$, 
\begin{align*}
&\sum_{Z\in \cM_{(M,t)}(X)}\pr[\ZZ=Z] \frac{N^t}{M^t}e^{t}\sum_{T\subseteq \bigvee_{j\le b}\hat S_j}  \,\, \prod_{x\in X}\binom{Z(x)}{T(x)} \\
&\le \sum_{Z\in \cM_{(M,t)}(X)}\pr[\ZZ=Z] (e/K)^{t}\,\,\sum_{T\subseteq \bigvee_{j\le b}\hat S_j} \,\, \prod_{x\in  \bigvee_{j\le b}\hat S_j} 2^{2\hat S(x)}\\
&\le  \sum_{Z\in \cM_{(M,t)}(X)}\pr[\ZZ=Z] (e/K)^{t} \cdot 2^{2n_b}\cdot 2^{n_b}\\
&=  (e/K)^{t} \cdot 2^{3n_b}.
\end{align*}
Here, in the second inequality, we used that the number of $T \subseteq \bigvee_{j\le b}\hat S_j$ is at most $2^{n_b}$ and in the last step we used that $\sum_{Z\in \cM_{(M,t)}(X)}\pr[\ZZ=Z]\le1$. Finally, we have
\[\begin{split} \sum_{b \ge 0} \,\, \sum_{\sss_b \text{ legal}} \,\, \sum_{.9n_b \le t \le n_b} \,\, \sum_{\text{$W$ bad}} \,\, \sum_{U \in  \cU_W(\sss_b,t)}  \,\, \pr[\WW=W] \prod_{x\in X}\frac{(eN\mu(x))^{U(x)}}{U(x)!} & \le  \sum_{b \ge 0} \,\, \sum_{\sss_b \text{ legal}} \,\, \sum_{.9n_b \le t \le n_b} (e/K)^{t}\cdot 2^{3n_b}\\
& \le \sum_{b \ge 0} \,\, \sum_{\sss_b \text{ legal}} (e^{-4}J_0)^{-.9n_b}.\\
\end{split}\]

The rest of the proof proceeds identically to the proof of Theorem \ref{Conj 5.7''}. As in \eqref{legal4} and \eqref{legal5}, the number of possibilities for legal $\sss_b$ is at most
\[ \prod_{j \le b} \log_{100}(2\cdot100^4(j+1)^2).\]
On the other hand, using the lower bound on $s_j$ in \eqref{legal4},
\[n_b\ge \sum_{j \le b} \max\{1, 100^j/(100^4(j+1)^2)\}.\]
Therefore, with our choice of $a_j$ from (\ref{aj'}), 
we have
\[\sum_{b \ge 0}\sum_{\sss_b \text{ legal}} (e^{-4}J_0)^{-.9n_b}\le \prod_{j \in [\tau]}(1+a_j)-1 \le \exp\left(\sum_{j \in [\tau]}a_j\right)-1 \le J_0^{-c}\]
for our choice of $c$ and $J_0$ in \eqref{Jc-w}. 
\end{proof}

Theorem \ref{thm:multi'-w} follows immediately from Lemma \ref{ML-multi-w}.
\begin{proof}[Proof of Theorem \ref{thm:multi'-w}]
Since $\cF$ does not have any cover $\cG$ with 
\[
\sum_{U \in \cG} \prod_{x\in X} \frac{(eN\mu(x))^{G(x)}}{G(x)!} \le 1/2,
\]
we have for all bad $W$ that 
\[
\sum_{U \in \cU(W)} \prod_{x\in X}\frac{(eN\mu(x))^{U(x)}}{U(x)!} > 1/2. 
\]
Thus, Lemma \ref{ML-multi-w} implies that 
\[
\mathbb{P}[\WW \textrm{ is bad}] \le 2J_0^{-c}. 
\]
Hence, 
\[
\mathbb{E} \sup_{S\in \cF}\sum_{i\in X}\WW(i)\lambda^{S}(i) \ge (1-2J_0^{-c})10^{-10} \ge 10^{-11}. 
\]
\end{proof}

\section*{Acknowledgements}
We would like to thank Michel Talagrand for raising the question on general positive empirical processes and for many helpful discussions. We are grateful to David Conlon, Amir Dembo, Jacob Fox, and Jeff Kahn for their support and useful comments. We would also like to thank the anonymous referees for insightful comments that improve the paper and prompt us to realize an oversight in a previous version of the paper. Huy Tuan Pham is supported by a Two Sigma Fellowship, a Clay Research Fellowship and a Stanford Science Fellowship. Jinyoung Park is supported by NSF grants DMS-2153844 and DMS-2324978.

\end{document}